\documentclass[11pt]{amsart}

\usepackage[T1]{fontenc}
\usepackage[utf8]{inputenc}
\usepackage{lmodern}
\usepackage{microtype}
\usepackage{amsmath,amssymb,amsfonts,amsthm,mathtools}
\usepackage{enumitem}
\usepackage[colorlinks=true,citecolor=blue,linkcolor=blue,urlcolor=blue]{hyperref}

\newtheorem{theorem}{Theorem}[section]
\newtheorem{proposition}[theorem]{Proposition}
\newtheorem{lemma}[theorem]{Lemma}

\newtheorem*{theoremA}{Theorem A}
\newtheorem*{theoremB}{Theorem B}
\newtheorem*{corollaryC}{Corollary C}
\theoremstyle{definition}
\newtheorem{definition}[theorem]{Definition}

\newcommand{\bN}{\mathbb{N}}

\newcommand{\cD}{\mathcal{D}}
\newcommand{\cE}{\mathcal{E}}

\newcommand{\cQ}{\mathcal{Q}}

\newcommand{\C}{\mathbb C}
\newcommand{\N}{\mathbb N}
\newcommand{\R}{\mathbb R}
\newcommand{\Qrat}{\mathbb Q}
\newcommand{\Z}{\mathcal Z}
\newcommand{\AT}{\operatorname{AT}}
\newcommand{\UAT}{\operatorname{UAT}}
\newcommand{\QD}{\operatorname{QD}}
\newcommand{\LFD}{\operatorname{LFD}}
\newcommand{\Tr}{\operatorname{Tr}}
\newcommand{\tr}{\operatorname{tr}}
\newcommand{\dist}{\operatorname{dist}}
\newcommand{\diag}{\operatorname{diag}}
\newcommand{\soc}{\operatorname{soc}}
\newcommand{\id}{\operatorname{id}}
\newcommand{\KK}{\operatorname{KK}}
\newcommand{\otmax}{\otimes_{\max}}
\newcommand{\otmin}{\otimes_{\min}}

\title[Locally finite-dimensional traces II]{On locally finite-dimensional traces II}
\author{Mehdi Moradi}
\address{Department of Mathematics and Statistics, University of Ottawa, K1N 6N5, Canada}
\email{mmoradi@uottawa.ca}
\author{Massoud Amini}
\address{Faculty of Mathematical Sciences, Tarbiat Modares University, Tehran 144115-134, Iran}
\email{mamini@modares.ac.ir}

\subjclass[2020]{46L35; 19K14}
\keywords{Locally finite-dimensional trace, quasidiagonal trace, locally reflexive C*-algebra, strongly self-absorbing C*-algebra}

\begin{document}
\begin{abstract}
We continue the study of locally finite-dimensional traces introduced in our earlier work. We give new characterizations of LFD traces in terms of finite-rank projections in irreducible representations and in the socle of the bidual. We show that, for separable nowhere scattered \(C^*\)-algebras, the set of all LFD traces is convex. We also prove that quasidiagonal traces form a face of the trace simplex and record applications to strongly self-absorbing \(C^*\)-algebras. We construct an exact tracially AF algebra not KK-equivalent to any nuclear \(C^*\)-algebra.
\end{abstract}

\maketitle

\section{Introduction}

The classification programme for C*-algebras depends heavily on understanding when infinite-dimensional objects can be recovered from finite-dimensional data. Traces are central in this problem: they encode the finite, stably finite part of the theory, and the way a trace is approximated by matrices often reflects deep structure of the ambient algebra. The study of finite-dimensional approximation phenomena goes back to Halmos's work on quasidiagonal operators \cite{Halmos1970}, Thayer's work on quasidiagonal C*-algebras \cite{Thayer1977}, and Voiculescu's intrinsic approach to quasidiagonality \cite{Voiculescu1991}. Popa's local finite-dimensional approximation property \cite{Popa1997}, Lin's tracial approximation theory \cite{LinTAF2001}, and Brown's tracial approximation classes \cite{BrownMemoir} provide the framework used here. The theorem of Tikuisis--White--Winter that faithful traces on separable nuclear UCT algebras are quasidiagonal is one of the main modern results connecting traces, quasidiagonality, and classification \cite{TWW2017}.

Amenable, quasidiagonal, and locally finite-dimensional traces form a natural hierarchy of tracial approximation properties. The locally finite-dimensional condition, introduced by Brown and studied further in \cite{AminiMoradi2023,BrownMemoir}, asks not only that the trace be approximated by finite-dimensional maps, but also that the algebra itself be approximated by the multiplicative domains of those maps. This extra multiplicative-domain requirement is the main source of difficulty. It is also what makes LFD traces particularly useful in connection with tracially AF algebras and classification.

The first goal of this paper is to replace the multiplicative-domain formulation by a more geometric one. We characterize LFD traces using finite-rank projections in irreducible representations, and equivalently using projections in the socle of the bidual. This viewpoint makes the distance to the multiplicative domain visible through commutators with finite-rank projections. It also clarifies the role of disjointness: when several finite-dimensional summands are used simultaneously, the relevant projections must be centrally disjoint in the bidual, not merely orthogonal as projections.

We then use this characterization to prove a convexity result. Thiel and Vilalta's theory of nowhere scattered C*-algebras \cite{ThielVilalta2024} gives exactly the representation-theoretic flexibility needed to duplicate finite-dimensional compression data in disjoint irreducible representations. Consequently, for separable nowhere scattered algebras, the LFD traces form a convex subset of the trace simplex.

The second theme is quasidiagonality of traces. Winter asked whether quasidiagonal traces form a face of the tracial state space \cite{Winter2016}. We prove that they do. The proof is based on cutting a quasidiagonal trace by central projections in its GNS von Neumann algebra. The nontrivial point is that the resulting central projection in a tracial ultraproduct must be lifted to a genuine projection in a norm-ultraproduct relative commutant. This uses Kirchberg's central surjectivity theorem, real-rank-zero of the relevant relative commutant, and projection lifting for quotients of real-rank-zero C*-algebras.

The third theme is uniformity. We prove that, for an irreducibly represented quasidiagonal C*-algebra containing no nonzero compact operators, every quasidiagonal trace is LFD. If the algebra is locally reflexive, then LFD traces are in fact uniformly LFD. The proof combines Brown's finite-rank projection theorem for quasidiagonal traces with local reflexivity and a subtracial normality argument, upgrading convergence from the algebra to the bidual.

These tracial results have two classification-theoretic consequences. First, they give a rigidity statement for strongly self-absorbing algebras: if \(\cD\) is strongly self-absorbing and \(K_*(\cD)\cong K_*(\cQ)\), where \(\cQ\) is the universal UHF algebra, then \(\cD\cong \cQ\) exactly when \(\cD\) is quasidiagonal. Second, they reformulate the UCT problem in terms of strongly self-absorbing uniqueness among simple nuclear tracially AF algebras with the K-theory of \(\cQ\).

We finish with a contrasting example. Starting from the Skandalis obstruction, in the form recalled by Higson--Guentner and sharpened rationally by Puschnigg \cite{Skandalis1988,HigsonGuentner2004,Puschnigg2018}, and then using Dadarlat's construction as developed by Niu--Wang \cite{DadarlatAF2000,NiuWang2021}, we obtain an exact separable unital tracially AF algebra which is not KK-equivalent to any nuclear C*-algebra and is not Jiang--Su stable. This example is placed at the end of the paper because it uses the approximation theory developed earlier but points in the opposite direction: tracial AF approximation alone does not force nuclear-type KK-behaviour.

The paper is organized as follows. Section 2 contains the bidual characterization of LFD traces, the convexity theorem for nowhere scattered algebras, and the faciality theorem for QD traces. Section 3 proves the uniform local finite-dimensionality result. Section 4 gives the applications to strongly self-absorbing algebras and the UCT reformulation. Section 5 constructs the Skandalis-type example.

\section{Locally finite-dimensional traces revisited}

\subsection{Preliminaries}

We shall use the following convention throughout. Projections \(p_1,\ldots,p_r\in\soc(A^{**})\) are called \emph{centrally disjoint} if their central supports are pairwise orthogonal in \(Z(A^{**})\). Equivalently, the finite-dimensional corners \(p_jA^{**}p_j\) live in mutually disjoint central summands of \(A^{**}\).

In this case, if \(P=p_1+\cdots+p_r\), then for every \(a\in A\),
\[
 [a,P]=0\quad\Longleftrightarrow\quad [a,p_j]=0\quad (1\le j\le r).
\]
Indeed, if \(z_j=z(p_j)\), then \(p_j=z_jP\), and since \(z_j\) is central,
\[
 [a,p_j]=[a,z_jP]=z_j[a,P].
\]
The converse follows by summing over \(j\). This central-disjointness hypothesis is essential: mere pairwise orthogonality of projections does not imply the displayed equivalence.

We begin by recalling Brown's tracial approximation classes and by fixing the representation-theoretic language used throughout the paper. The definitions are standard, but we spell them out because the distinction between quasidiagonal and locally finite-dimensional approximation is central to the arguments below.

In this paper traces are tracial states, and \(T(A)\) denotes the tracial state space of a C*-algebra \(A\). We write \(M_n\) for the algebra of \(n\times n\) complex matrices, \(\Tr\) for its non-normalized trace, and \(\tr_n\) for its normalized trace. For \(x\in M_n\), put \(\|x\|_2=\tr_n(x^*x)^{1/2}\). If \(\rho:A\to B(H_\rho)\) is a representation, we denote its central support by \(c(\rho)\). A projection \(p\in A^{**}\) belongs to \(\soc(A^{**})\) if \(pA^{**}p\) is finite-dimensional.

\begin{definition}\label{def:traces}
Let \(A\) be a C*-algebra and \(\tau\in T(A)\).
\begin{enumerate}[label=\textup{(\roman*)}]
\item \(\tau\) is \emph{amenable} if there is a net of c.c.p. maps \(\phi_i:A\to M_{k(i)}\) such that
\[
 \|\phi_i(ab)-\phi_i(a)\phi_i(b)\|_2\to 0,
 \qquad \tau(a)=\lim_i\tr_{k(i)}(\phi_i(a))
\]
for all \(a,b\in A\).
\item \(\tau\) is \emph{quasidiagonal}, or QD, if the same conditions hold with operator norm multiplicativity:
\[
 \|\phi_i(ab)-\phi_i(a)\phi_i(b)\|\to 0.
\]
\item \(\tau\) is \emph{locally finite-dimensional}, or LFD, if there is a net of c.c.p. maps \(\phi_i:A\to M_{k(i)}\) such that
\[
 \dist(a,A_{\phi_i})\to 0,
 \qquad \tau(a)=\lim_i\tr_{k(i)}(\phi_i(a))
\]
for all \(a\in A\), where \(A_{\phi_i}\) denotes the multiplicative domain of \(\phi_i\).
\end{enumerate}
When the convergence of the trace functionals is in norm in \(A^*\), we add the adjective \emph{uniform}. We write \(\AT(A)\), \(\AT(A)_{\QD}\), and \(\AT(A)_{\LFD}\) for the amenable, QD, and LFD traces, and \(\UAT(A)\), \(\UAT(A)_{\QD}\), and \(\UAT(A)_{\LFD}\) for the corresponding uniform classes.
\end{definition}

\begin{definition}\label{def:nowhere-scattered}
Following Thiel--Vilalta \cite{ThielVilalta2024}, a C*-algebra \(A\) is called
\emph{nowhere scattered} if none of its quotients contains a minimal open projection.
Here an open projection \(p\in A^{**}\) is minimal if its associated hereditary
subalgebra is one-dimensional, equivalently
\[
 pA^{**}p\cap A=\C p .
\]
An \emph{ideal-quotient}, or subquotient, of \(A\) means a quotient \(I/J\), where
\(J\triangleleft I\triangleleft A\) are closed two-sided ideals. The main
characterization of Thiel--Vilalta says that, for a C*-algebra \(A\), the following
conditions are equivalent:
\begin{enumerate}[label=\textup{(\roman*)}]
\item \(A\) is nowhere scattered;
\item \(A\) has no nonzero elementary ideal-quotient;
\item \(A\) has no nonzero scattered ideal-quotient;
\item \(A\) has no nonzero type I ideal-quotient.
\end{enumerate}
We use this characterization only to ensure that finite-dimensional compression
data can be duplicated in disjoint irreducible representations, without producing
an isolated finite-dimensional subquotient.
\end{definition}

The next proposition is the basic technical tool of the paper. It translates the definition of an LFD trace into finite-rank compression data. The point is that the multiplicative domain of a compression is controlled by commutation with the underlying projection. When several summands are present, central disjointness allows us to reduce commutation with all summands to a single Blackadar--Kirchberg distance estimate.

\begin{proposition}\label{prop:char-LFD}
Let \(A\) be a C*-algebra and let \(\tau\in T(A)\). The following are equivalent.
\begin{enumerate}[label=\textup{(\alph*)}]
\item \(\tau\in\AT(A)_{\LFD}\).
\item For every \(\varepsilon>0\) and every finite set of contractions \(F\subset A\), there are positive integers \(k_1,\ldots,k_r\), pairwise disjoint irreducible representations
\[
 \pi_j:A\to B(H_j),\qquad 1\le j\le r,
\]
and finite-rank projections \(p_j\in B(H_j)\), such that
\[
 \|[p_j,\pi_j(a)]\|<\varepsilon
 \qquad (a\in F,
 1\le j\le r),
\]
and
\[
 \left|
 \tau(a)-
 \frac{\sum_{j=1}^rk_j\Tr(p_j\pi_j(a))}{\sum_{j=1}^rk_j\Tr(p_j)}
 \right|<\varepsilon
 \qquad (a\in F).
\]
\item For every \(\varepsilon>0\) and every finite set of contractions \(F\subset A\), there are positive integers \(k_1,\ldots,k_r\) and centrally disjoint finite-rank projections
\[
 p_1,\ldots,p_r\in\soc(A^{**})
\]
such that
\[
 \|[p_j,a]\|<\varepsilon
 \qquad (a\in F,
 1\le j\le r),
\]
and
\[
 \left|
 \tau(a)-
 \frac{\sum_{j=1}^rk_j\Tr(p_ja)}{\sum_{j=1}^rk_j\Tr(p_j)}
 \right|<\varepsilon
 \qquad (a\in F).
\]
\end{enumerate}
\end{proposition}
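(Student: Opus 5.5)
The plan is to prove (a)\(\Rightarrow\)(b)\(\Rightarrow\)(c)\(\Rightarrow\)(a), using the structure theory of finite-dimensional c.c.p. maps and their multiplicative domains.

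\emph{(b)\(\Leftrightarrow\)(c).} Disjoint irreducible representations \(\pi_j\) have orthogonal central covers \(c(\pi_j)\) in \(A^{**}\). Each \(c(\pi_j)A^{**}\cong B(H_j)\) via the normal extension of \(\pi_j\). A finite-rank projection \(p_j\in B(H_j)\) therefore corresponds to a projection in \(c(\pi_j)A^{**}\) with finite-dimensional corner, i.e.\ to an element of \(\soc(A^{**})\). Under this identification \(\|[p_j,a]\|=\|[p_j,\pi_j(a)]\|\) and \(\Tr(p_j a)=\Tr(p_j\pi_j(a))\).

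Conversely, a projection in \(\soc(A^{**})\) has central support a finite sum of minimal central projections of type I factor summands \(B(H)\), each coming from an irreducible representation. So a centrally disjoint family in (c) decomposes into pieces \(p_j c\) over these minimal central summands. Pieces lying in distinct summands are automatically centrally disjoint, and the central-disjointness identity from the preliminaries gives \([a,p_jc]=c[a,p_j]\), so the commutator estimates pass to the pieces. After relabeling, and keeping the multiplicities \(k_j\) attached to each piece, this yields the data of (b).

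\emph{(b)\(\Rightarrow\)(a).} Set \(\pi=\bigoplus_j \pi_j^{(k_j)}\) and \(P=\bigoplus_j p_j^{(k_j)}\), and define \(\phi(a)=P\pi(a)P\in M_N\) with \(N=\sum_j k_j\Tr(p_j)\). This is a c.c.p. map whose normalized trace is exactly the displayed weighted average. Since the \(\pi_j\) are pairwise disjoint, Voiculescu/Kadison transitivity arguments make the blocks independent: the Blackadar--Kirchberg style estimate shows that an element \(a\) with \(\|[P,\pi(a)]\|<\varepsilon\) lies within \(\delta(\varepsilon)\to0\) of some \(b\in A\) with \([P,\pi(b)]=0\), and such \(b\) lies in \(A_\phi\). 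Concretely, one uses the Kadison transitivity theorem (with Glimm's lemma applied to the disjoint family) to find \(b\in A\) with \(\pi(b)\) acting on \(P\) as \(P\pi(a)P\) and killing the off-diagonal parts \((1-P)\pi(a)P\), with \(\|b-a\|\) controlled by \(\|[P,\pi(a)]\|\). Passing to a net indexed by \((F,\varepsilon)\) then gives \(\dist(a,A_{\phi_i})\to0\).

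\emph{(a)\(\Rightarrow\)(b).} Given \(\phi:A\to M_k\) c.c.p. with \(\dist(a,A_\phi)\) small on \(F\), let \(B=\overline{\phi(A_\phi)}\), or rather consider \(\phi|_{A_\phi}\), a \(*\)-homomorphism into \(M_k\). Via Stinespring, \(\phi=V^*\sigma(\cdot)V\), and \(A_\phi\) is exactly the set of \(a\) commuting with \(VV^*\) in the dilation \(\sigma\). Decompose \(\sigma\) so that the part relevant to \(VV^*\) is finite-dimensional. Since \(\phi\) has finite rank, one can replace \(\sigma\) by the minimal Stinespring dilation restricted appropriately and decompose it into irreducible representations. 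Grouping equivalent irreducibles produces disjoint \(\pi_j\) with multiplicities \(k_j\) and compressions \(p_j\), with commutator estimates inherited from \(\dist(a,A_\phi)\). The main obstacle is here. The support projection in a multiple \(\pi_j^{(k_j)}\) need not be of the form \(p_j^{(k_j)}\), and the irreducible subrepresentations need not be finite-dimensional. I would handle this by perturbing \(\phi\) slightly so that \(\phi(A_\phi)\) sits in \(M_k\) as a finite-dimensional \(*\)-homomorphic image. Its irreducible constituents are then finite-dimensional representations of \(A_\phi\), to be extended to irreducible representations of \(A\) through the relative commutant structure, and equal multiplicities are arranged by rational approximation of weights, absorbed into \(k_j\).
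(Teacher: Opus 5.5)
Your proposal is correct in outline, but the decisive implication takes a different route from the paper.

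\textbf{The implication (b)$\Rightarrow$(a).} The paper closes the cycle through the bidual. After (b)$\Rightarrow$(c), it uses central disjointness to identify the multiplicative domain of the compression map $\phi$ with the commutant $A_P$ of the single socle projection $P=\sum_j p_j$. It then quotes the Blackadar--Kirchberg formula $\dist(a,A_P)=\|[a,P]\|$.

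You go from (b) to (a) directly by Kadison transitivity. Since the $\pi_j$ are pairwise disjoint, $(\bigoplus_j\pi_j)(A)''=\bigoplus_j B(H_j)$ contains $\bigoplus_j p_j$. The usual transitivity argument therefore produces, for self-adjoint $a$, a self-adjoint $c\in A$ with $\pi_j(c)p_j=T_jp_j$, where $T_j=(1-p_j)\pi_j(a)p_j+p_j\pi_j(a)(1-p_j)$, and with $\|c\|$ bounded by a constant times $\max_j\|[p_j,\pi_j(a)]\|$. Then $b=a-c$ commutes with every $p_j$, so $b\in A_\phi$. Splitting a general $a$ into real and imaginary parts gives $\dist(a,A_\phi)\le C\max_j\|[p_j,\pi_j(a)]\|$.

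This is self-contained, avoids the bidual and the cited distance formula, and shows exactly where disjointness enters; the cost is a harmless constant instead of an equality. Two points to fix in the write-up:
\begin{itemize}
\item Use the self-adjoint form of transitivity, since both off-diagonal corners $(1-P)\pi(a)P$ and $P\pi(a)(1-P)$ must be removed. Your sketch mentions only the first.
\item Glimm's lemma plays no role here.
\end{itemize}
Because you bypass the paper's (c)$\Rightarrow$(a), your (c)$\Rightarrow$(b) is genuinely needed to bring (c) into the cycle. It is correct: split each socle projection over the finitely many minimal central summands below its central support, and use $[a,pz]=z[a,p]$.

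\textbf{The implication (a)$\Rightarrow$(b).} Here you end up at the paper's argument, and the Stinespring detour and the proposed perturbation should be dropped. $\phi|_{A_\phi}$ is already a finite-dimensional representation, so decompose it as $\bigoplus_j\sigma_j^{\oplus k_j}$. Then extend each $\sigma_j$ to an irreducible $\pi_j$ of $A$ by the extension theorem for irreducible representations of C*-subalgebras; relative commutants are not involved. The point you leave implicit is that the space of $\sigma_j$ is then invariant, hence reducing, for $\pi_j(A_\phi)$. This is exactly what gives $\|[p_j,\pi_j(a)]\|\le 2\dist(a,A_\phi)$. The $k_j$ are the actual multiplicities, so no rational approximation is needed.

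\textbf{A step glossed over by both you and the paper.} Suppose inequivalent $\sigma_j,\sigma_{j'}$ extend to equivalent irreducibles. Schur's lemma places their subspaces orthogonally in one Hilbert space, but when $k_j\ne k_{j'}$ they cannot be merged into a single pair $(p,k)$. So ``grouping'' alone does not produce the pairwise disjoint family required in (b). Your transitivity argument, applied inside that representation to the sum of the orthogonal projections, shows such pieces are harmless for getting back to (a). The issue therefore concerns only the literal disjointness requirement in (b), but it needs an extra argument.
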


\begin{proof}
\((a)\Rightarrow(b)\). Let \(F\subset A\) be finite and let \(\varepsilon>0\). Choose an LFD c.c.p. map \(\phi:A\to M_N\) and, for each \(a\in F\), choose \(b_a\in A_\phi\) with \(\|a-b_a\|\) sufficiently small. The restriction
\[
 \phi|_{A_\phi}:A_\phi\to M_N
\]
is a finite-dimensional representation. Decompose it as
\[
 \phi|_{A_\phi}\cong
 (\sigma_1^{\oplus k_1})\oplus\cdots\oplus(\sigma_r^{\oplus k_r}),
\]
where the \(\sigma_j\)'s are irreducible and pairwise inequivalent after grouping. By the extension theorem for irreducible representations of C*-subalgebras \cite[Theorem 5.5.1]{Murphy1990}, each \(\sigma_j\) extends to an irreducible representation \(\pi_j:A\to B(H_j)\). After grouping extensions with the same central support, we may assume the \(\pi_j\)'s are pairwise disjoint. Let \(p_j\) be the projection onto the finite-dimensional Hilbert space on which \(\sigma_j\) acts. Since \(b_a\in A_\phi\), we have \([p_j,\pi_j(b_a)]=0\), hence
\[
 \|[p_j,\pi_j(a)]\|
 \le 2\|a-b_a\|.
\]
Taking \(\|a-b_a\|\) sufficiently small gives the commutator estimate. The trace estimate follows from the decomposition of \(\phi|_{A_\phi}\) and the approximation of \(a\) by \(b_a\).

\((b)\Rightarrow(c)\). Identify each irreducible summand \(B(H_j)=\pi_j(A)''\) with the central summand \(c(\pi_j)A^{**}\). Since the \(\pi_j\)'s are pairwise disjoint, their central supports are orthogonal. The finite-rank projections \(p_j\) then determine centrally disjoint projections in \(\soc(A^{**})\), and the estimates are unchanged.

\((c)\Rightarrow(a)\). Define
\[
 \phi:A\to
 M_{k_1}(p_1A^{**}p_1)\oplus\cdots\oplus M_{k_r}(p_rA^{**}p_r)
\]
by
\[
 \phi(a)=
 \diag(p_1ap_1,\ldots,p_1ap_1)\oplus\cdots\oplus
 \diag(p_rap_r,\ldots,p_rap_r),
\]
where the \(j\)-th block is repeated \(k_j\) times. For a single compression \(a\mapsto pap\),
\[
 pa^*ap-(pap)^*(pap)=pa^*(1-p)ap,
\]
and similarly
\[
 paa^*p-(pap)(pap)^*=pa(1-p)a^*p.
\]
Thus \(a\) lies in the multiplicative domain of \(a\mapsto pap\) if and only if \(ap=pa\). Hence
\[
 A_\phi=\{a\in A:[a,p_j]=0,\ 1\le j\le r\}.
\]
Since the \(p_j\)'s are centrally disjoint, this is the same as
\[
 A_\phi=\{a\in A:[a,P]=0\},
 \qquad P=p_1+\cdots+p_r.
\]
By the Blackadar--Kirchberg distance formula \cite[Corollary 3.5]{BlackadarKirchberg2001}, also recorded in \cite[Proposition 11.3.6]{BrownOzawa2008},
\[
 \dist(a,A_\phi)=\dist(a,A_P)=\|[a,P]\|
 =\max_{1\le j\le r}\|[a,p_j]\|.
\]
Together with the trace estimate in \((c)\), this gives an LFD approximation to \(\tau\).
\end{proof}

\begin{lemma}
Let \(A\) be separable and nowhere scattered. Let \(\pi:A\to B(H)\) be an irreducible representation, let \(p\in B(H)\) be a finite-rank projection, let \(F\subset A\) be finite, and let \(\delta>0\). For every \(N\in\N\), there are pairwise disjoint irreducible representations
\[
 \pi^{(1)},\ldots,\pi^{(N)}
\]
and finite-rank projections \(p^{(m)}\in B(H^{(m)})\) with
\[
 \operatorname{rank}(p^{(m)})=\operatorname{rank}(p),
\]
such that, for every \(a\in F\) and every \(m\),
\[
 \left|
 \Tr(p^{(m)}\pi^{(m)}(a))-
 \Tr(p\pi(a))
 \right|<\delta\operatorname{rank}(p),
\]
and
\[
 \|[p^{(m)},\pi^{(m)}(a)]\|
 <\|[p,\pi(a)]\|+\delta.
\]
\end{lemma}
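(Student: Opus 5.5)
The plan is to approximate the pure-state data of \(\pi\) restricted to \(p\) by irreducible representations with different kernels, or at least different equivalence classes. Write \(p=\sum_{i=1}^n \xi_i\otimes\xi_i^*\) for an orthonormal basis \(\xi_1,\dots,\xi_n\) of \(pH\), with \(n=\operatorname{rank}(p)\). The relevant data are then the finitely many matrix coefficients \(\langle \pi(a)\xi_i,\xi_l\rangle\) and \(\|(1-p)\pi(a)p\|\) for \(a\in F\cup F^*\). It suffices to find, inductively in \(m\), an irreducible \(\pi^{(m)}\) disjoint from \(\pi^{(1)},\dots,\pi^{(m-1)}\) and orthonormal vectors \(\eta_1,\dots,\eta_n\in H^{(m)}\) with
\[
\langle \pi^{(m)}(x)\eta_i,\eta_l\rangle\approx\langle \pi(x)\xi_i,\xi_l\rangle
\]
for all \(x\) in the finite set \(G=F\cup F^*\cup\{a^*a,aa^*:a\in F\}\) and all \(i,l\). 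If these coefficients agree within a small tolerance, then \(p^{(m)}=\sum\eta_i\otimes\eta_i^*\) satisfies the trace estimate. For the commutator, I would use
\[
\|(1-q)xq\|^2=\|q x^*x q-qx^*qxq\|,
\]
which is a continuous function of the compressed matrices of \(x\), \(x^*\) and \(x^*x\). Since \(\|[q,x]\|=\max(\|(1-q)xq\|,\|(1-q)x^*q\|)\), the commutator norms also move only slightly. So everything reduces to approximating the compression map \(\phi(x)=p\pi(x)p\in M_n\) in the point-weak topology by compressions \(q\sigma(\cdot)q\) of irreducible representations \(\sigma\) avoiding finitely many prescribed classes.

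To produce such \(\sigma\), I would use the structure of \(\hat A\). The key claim is that if \(A\) is nowhere scattered, then no point of \(\hat A\) is isolated in the sense needed. Suppose the class \([\pi]\) had a neighbourhood in the Fell topology meeting only finitely many classes. Then the corresponding open set gives an ideal \(I\) whose spectrum is a finite set containing \([\pi]\). Shrinking further (using that finite \(T_0\) spaces have closed points relative to suitable open subsets), one gets an ideal-quotient \(I/J\) whose spectrum is a single point. Because \(A\) is separable, such a simple subquotient with a one-point spectrum that is realized as a one-point open piece of \(\hat A\) forces an elementary (or at least type I) ideal-quotient. This contradicts condition (iv) of the Thiel--Vilalta characterization in Definition~\ref{def:nowhere-scattered}. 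Hence every neighbourhood of \([\pi]\) contains infinitely many classes, in fact classes other than any given finitely many. Equivalently, \(\ker\pi\) is a limit of primitive ideals distinct from those already used.

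The transfer from topology of \(\hat A\) to vector states is the main obstacle. I would try the standard fact, Fell/Glimm: if \(\pi\) is weakly contained in a family \(\{\sigma_\lambda\}\) (i.e.\ \([\pi]\) is in the closure), then every vector state of \(\pi\), and more generally the \(n\)-tuple compression \(\phi\) viewed as a state on \(M_n(A)\) via \(\sum \langle\pi(x_{il})\xi_l,\xi_i\rangle\), is a weak* limit of states of the form \(\omega\circ\sigma_\lambda\). Those limits could however be mixtures of vector states over direct sums. To get a single irreducible \(\sigma\) with orthonormal \(\eta_i\), I would apply this to the pure state \(\Phi\) of \(M_n(A)\) obtained from \(\pi\otimes\id_n\) and the unit vector \(n^{-1/2}\sum \xi_i\otimes e_i\). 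Pure states of \(M_n(A)\) in the closure of the pure states supported on the other classes are weak* limits of pure states there, by Glimm's lemma for separable algebras. This yields \(\sigma\) and a unit vector \(\zeta=\sum\zeta_i\otimes e_i\) approximating \(\Phi\) on \(M_n(G)\). Evaluating on matrix units shows that the Gram matrix \(\langle\zeta_i,\zeta_l\rangle\) is approximately \(n^{-1}\delta_{il}\). Gram--Schmidt then produces an orthonormal family \(\eta_i\) close to \(\sqrt n\,\zeta_i\), provided \(\sigma(A)\) acts nondegenerately and we use an approximate unit to handle \(e_i\)-components in the non-unital case. Finally, choosing \(\sigma\) avoiding the classes of \(\pi^{(1)},\dots,\pi^{(m-1)}\) is possible because we removed only finitely many points from a set having \([\pi]\) in its closure infinitely often. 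The tolerance is chosen at the start depending on \(\delta\), \(n\) and \(F\).
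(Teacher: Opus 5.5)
Your proposal is correct and takes the same route as the paper: inductively approximate the pure matricial state \(a\mapsto p\pi(a)p\) by compressions of irreducible representations whose classes avoid the finitely many already chosen, then read off both estimates from closeness of the compressed matrices. You also supply what the paper leaves as a ``standard consequence'': Glimm's theorem, showing no point of \(\hat A\) has a finite open neighbourhood since it would give a nonzero type I ideal; weak containment plus Krein--Milman for the pure state on \(M_n(A)\); and \(a^*a\), \(aa^*\) in the finite set so that \(\|(1-q)xq\|\) is controlled, though your aside that \(\ker\pi\) is a limit of distinct primitive ideals should be dropped, as the approximating classes may all have kernel \(\ker\pi\) (e.g.\ when \(A\) is simple).
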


\begin{proof}
The compression \(a\mapsto p\pi(a)p\) is a pure matricial state. A standard consequence of nowhere scatteredness is that pure matricial states are weak-* limit points of pure matricial states coming from pairwise disjoint irreducible representations; equivalently, after excluding any prescribed finite family of primitive ideals, one can approximate the compression on a prescribed finite set. This is the non-simple version of the disjointness argument used in \cite{AminiMoradi2023}, with nowhere scatteredness supplied by the Thiel--Vilalta characterization \cite{ThielVilalta2024}.

Applying this fact inductively, choose \(\pi^{(1)},\ldots,\pi^{(N)}\), avoiding the previously chosen central supports at each step, and choose finite-dimensional subspaces carrying compressions within a small tolerance \(\eta\) of \(p\pi(\cdot)p\) on \(F\). Let \(p^{(m)}\) be the projection onto the chosen subspace. The trace estimate follows from
\[
 \left|
 \Tr(p^{(m)}\pi^{(m)}(a))-\Tr(p\pi(a))
 \right|
 \le \operatorname{rank}(p)\,
 \|p^{(m)}\pi^{(m)}(a)p^{(m)}-p\pi(a)p\|,
\]
and the commutator estimate follows by comparing the two compressed matrix systems and then taking \(\eta\) sufficiently small relative to \(\delta\).
\end{proof}

We now turn to convexity. In the simple case this was proved in \cite{AminiMoradi2023}. The argument only needs enough irreducible representations to separate repeated copies of a given finite-dimensional compression. Nowhere scatteredness supplies precisely this substitute for simplicity.

\begin{theorem}\label{thm:convex-LFD}
Let \(A\) be a separable nowhere scattered C*-algebra. Then \(\AT(A)_{\LFD}\) is convex.
\end{theorem}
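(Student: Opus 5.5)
We will verify convexity through the compression characterization in Proposition~\ref{prop:char-LFD}(b). Let \(\tau_1,\tau_2\in\AT(A)_{\LFD}\) and \(t\in(0,1)\), and set \(\tau=t\tau_1+(1-t)\tau_2\). Since \(\AT(A)_{\LFD}\) is defined by pointwise approximation, we may first reduce to rational \(t=u/(u+v)\) with \(u,v\in\N\). Indeed, the conditions in Proposition~\ref{prop:char-LFD}(b) are stable under small perturbation of \(\tau\) on \(F\): for \(t'\) close to \(t\) we have \(|\tau-\tau'|\le 2|t-t'|\) on contractions. Fix a finite set of contractions \(F\) and \(\varepsilon>0\). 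Apply Proposition~\ref{prop:char-LFD}(b) to \(\tau_1\) and to \(\tau_2\) with tolerance \(\varepsilon/4\). This gives data \((k_j,\pi_j,p_j)_{j\le r}\) and \((l_i,\rho_i,q_i)_{i\le s}\), with normalizing weights \(W_1=\sum_j k_j\Tr(p_j)\) and \(W_2=\sum_i l_i\Tr(q_i)\).

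The naive combined datum takes the union of the two families, with multiplicities rescaled so that the weighted average becomes \(\frac{u}{u+v}\cdot(\text{first average})+\frac{v}{u+v}\cdot(\text{second average})\). Concretely, we multiply the \(k_j\) by \(uW_2\) and the \(l_i\) by \(vW_1\). The commutator and trace estimates then hold automatically. The only failure is that some \(\pi_j\) may be equivalent to (not disjoint from) some \(\rho_i\), whereas Proposition~\ref{prop:char-LFD}(b) requires pairwise disjoint representations.

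This is where the preceding Lemma and nowhere scatteredness enter, and it is the main obstacle. For each pair \(\pi_j,p_j\), the Lemma (with \(N\) as large as needed, and \(\delta\) small) produces many pairwise disjoint irreducible representations carrying compressions of the same rank. These compressions have nearly the same traces \(\Tr(p^{(m)}\pi^{(m)}(a))\) and commutators smaller than \(\|[p_j,\pi_j(a)]\|+\delta<\varepsilon\). We will replace every summand of both families by such fresh copies, chosen inductively so that each new representation avoids the finitely many central supports already used. The proof of the Lemma allows prescribed primitive ideals to be excluded, and this is the point we would check most carefully: that the inductive choice really produces a globally pairwise disjoint family across both \(\tau_1\)- and \(\tau_2\)-data. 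We could alternatively absorb multiplicities by splitting each \(k\)-fold summand into \(k\) disjoint copies with multiplicity one, which is also allowed.

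Finally we check the estimates. Each replaced rank-\(\operatorname{rank}(p)\) block changes the numerator by at most \(\delta\) times its weight, and the denominators are unchanged because ranks are preserved. Hence the combined weighted average differs from \(t\tau_1+(1-t)\tau_2\) on \(F\) by at most \(\varepsilon/4+\delta<\varepsilon\). Proposition~\ref{prop:char-LFD}(b)\(\Rightarrow\)(a) then gives \(\tau\in\AT(A)_{\LFD}\), and the approximation for irrational \(t\) follows from the rational reduction above.
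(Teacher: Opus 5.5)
Your proposal is correct and follows essentially the paper's route: you combine the compression data from Proposition \ref{prop:char-LFD}(b) for the two traces and use the disjoint duplication lemma to make all representations mutually disjoint. The differences are minor. You handle rational weights directly by rescaling multiplicities and then perturb to irrational \(t\), whereas the paper proves midpoint convexity, iterates to dyadic combinations and invokes weak-* closedness. The global-disjointness point you flag, which the paper also uses tacitly, follows from the lemma's statement alone: take \(N\) larger than the total number of summands; each previously chosen irreducible representation is equivalent to at most one of the \(N\) pairwise disjoint copies, so a greedy choice always succeeds.
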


\begin{proof}
We follow the convexity argument of \cite{AminiMoradi2023}, replacing simplicity by nowhere scatteredness. It is enough to prove midpoint convexity and then use weak-* closedness of the LFD condition. Let \(\tau_0,\tau_1\in\AT(A)_{\LFD}\), let \(F\subset A\) be a finite set of contractions, and let \(\varepsilon>0\). Choose \(\eta>0\) small enough so that all accumulated errors below are less than \(\varepsilon\).

By Proposition \ref{prop:char-LFD}, choose finite-dimensional compression functionals \(f_0,f_1\) with
\[
 |f_i(a)-\tau_i(a)|<\eta
 \qquad (a\in F,
 i=0,1),
\]
where
\[
 f_0(a)=
 \frac{\sum_{j=1}^r\Tr(p_j\pi_j(a))}{D},
 \qquad
 D=\sum_{j=1}^r\Tr(p_j),
\]
and
\[
 f_1(a)=
 \frac{\sum_{\ell=1}^s\Tr(q_\ell\rho_\ell(a))}{E},
 \qquad
 E=\sum_{\ell=1}^s\Tr(q_\ell).
\]
Multiplicity coefficients are absorbed by repeating summands.

Use the disjoint duplication lemma to replace the family representing \(f_0\) by \(E\) disjoint copies and the family representing \(f_1\) by \(D\) disjoint copies, all mutually disjoint, with the compressed matrix coefficients changed by less than \(\eta\) on \(F\). Denote these copies by
\[
 (p_{j,m},\pi_{j,m}),\qquad 1\le j\le r,
 1\le m\le E,
\]
and
\[
 (q_{\ell,n},\rho_{\ell,n}),\qquad 1\le \ell\le s,
 1\le n\le D.
\]
Set
\[
 h(a)=\frac{
 \sum_{j=1}^r\sum_{m=1}^E\Tr(p_{j,m}\pi_{j,m}(a))+
 \sum_{\ell=1}^s\sum_{n=1}^D\Tr(q_{\ell,n}\rho_{\ell,n}(a))}{2DE}.
\]
Then \(h\) is represented by pairwise disjoint finite-rank compressions and has the same commutator control as the original two systems, up to the error \(\eta\). Moreover,
\[
 \left|h(a)-\frac{f_0(a)+f_1(a)}2\right|<\eta
 \qquad (a\in F).
\]
Therefore
\[
 \left|h(a)-\frac{\tau_0(a)+\tau_1(a)}2\right|<3\eta
 \qquad (a\in F).
\]
Proposition \ref{prop:char-LFD} gives
\[
 \frac{\tau_0+\tau_1}{2}\in\AT(A)_{\LFD}.
\]
Iterating the midpoint argument gives closure under dyadic convex combinations. Finally, \(\AT(A)_{\LFD}\) is weak-* closed: for fixed \(F\) and \(\varepsilon\), a weak-* limit of LFD traces is approximated on \(F\) by first choosing a nearby LFD trace and then using its finite-dimensional approximation. Hence \(\AT(A)_{\LFD}\) is convex.
\end{proof}

\subsection{The face of quasidiagonal traces}

We next prove that QD traces form a face. The idea is simple at the level of traces: if a QD trace is decomposed as a convex combination, the summands are obtained by cutting the GNS von Neumann algebra by central positive contractions. The difficulty is to show that these cut traces still admit norm-multiplicative matrix approximations. This is done first for central projections, and then for central positive contractions by spectral approximation.

We use a central surjectivity input. Let \(\omega\) be a free ultrafilter and let \((B_n,\tau_n)\) be separable unital tracial C*-algebras. Put
\[
 B=\prod_\omega B_n,
 \qquad
 M=\prod_\omega \pi_{\tau_n}(B_n)''.
\]
Let \(q_\omega:B\to M\) be the quotient map.

\begin{lemma}\label{lem:central-surj}
For every separable C*-subalgebra \(S\subset B\), the sequence
\[
 0\longrightarrow \ker(q_\omega)\cap S'
 \longrightarrow B\cap S'
 \longrightarrow M\cap q_\omega(S)'
 \longrightarrow 0
\]
is exact.
\end{lemma}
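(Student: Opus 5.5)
Exactness at the first two spots is formal. The left map is the inclusion of \(\ker(q_\omega)\cap S'\) into \(B\cap S'\), so it is injective. The kernel of \(q_\omega\) restricted to \(B\cap S'\) is \(\ker(q_\omega)\cap S'\). The same restriction lands in \(M\cap q_\omega(S)'\), because \(q_\omega\) is a \(*\)-homomorphism. So the whole content of the lemma is surjectivity: every \(x\in M\cap q_\omega(S)'\) must lift to an element of \(B\) commuting with \(S\). This is Kirchberg's central surjectivity theorem in the tracial ultrapower setting. I would prove it by a reindexing (diagonal) argument in the style of Kirchberg's \(\varepsilon\)-test / Sato's lemma.

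\emph{Step 1 (lifting without commutation).} First I would check that \(q_\omega:B\to M\) is surjective. Take \(x=(x_n)_\omega\) with \(x_n\in\pi_{\tau_n}(B_n)''\) and \(\sup_n\|x_n\|\le\|x\|\). By Kaplansky density, choose \(b_n\in B_n\) with \(\|b_n\|\le\|x_n\|\) and \(\|\pi_{\tau_n}(b_n)-x_n\|_{2}<1/n\). Then \((b_n)\) is a bounded sequence lifting \(x\). Since \(M\) is also the quotient of \(B\) by the trace-kernel ideal \(\{(b_n):\lim_\omega\|b_n\|_{2,\tau_n}=0\}\), every fibre of \(q_\omega\) consists of bounded sequences that differ by \(\|\cdot\|_2\)-null sequences.

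\emph{Step 2 (approximate commutation in norm).} Fix a dense sequence \(s^{(1)},s^{(2)},\dots\) in \(S\), with representatives \(s^{(k)}=(s^{(k)}_n)\). Start from any lift \((b_n)\) of \(x\). The hypothesis \([x,q_\omega(s^{(k)})]=0\) only says \(\lim_\omega\|[b_n,s^{(k)}_n]\|_2=0\), and I need to upgrade this to norm. Here I would use an averaging / positive-element perturbation. First reduce to \(x\) self-adjoint, then to \(0\le x\le1\) by taking real and imaginary parts and affine rescaling. Then replace \(b_n\) by an element of the form \(\sum_i e_{n,i} b_n e_{n,i}\), or better apply Haagerup's averaging inside the tracial von Neumann algebra. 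The estimates I aim for are \(\|[b_n',s^{(k)}_n]\|\le \delta_{n,k}\) in operator norm for \(k\le n\), while \(\|b_n'-b_n\|_2\) stays small. The standard tool is the fact, valid in the ultrapower, that norm- and \(2\)-norm-commutation agree for a lift chosen via a quasicentral approximate unit argument applied to the trace-kernel ideal \(J=\ker q_\omega\).

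Concretely, I would pick a quasicentral approximate unit \((e_\lambda)\) of \(J\) relative to the separable algebra \(C^*(S,b)\), and set \(b'=(1-e_\lambda)^{1/2}b(1-e_\lambda)^{1/2}\) modulo small errors. Then \(b'\) still lifts \(x\), since \(e_\lambda\in J\). Moreover \([b',s]\approx(1-e_\lambda)^{1/2}[b,s](1-e_\lambda)^{1/2}\), and \([b,s]\in J\), so this goes to \(0\) in norm along the approximate unit. This is exactly the Arveson/Kirchberg argument, and it does not even require passing to coordinates.

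\emph{Step 3 (exact commutation via the \(\varepsilon\)-test).} Step 2 gives, for each finite set \(\{s^{(1)},\dots,s^{(k)}\}\) and each \(\varepsilon\), a lift \(b^{(k)}\) with \(\|[b^{(k)},s^{(i)}]\|<1/k\) for \(i\le k\). Kirchberg's \(\varepsilon\)-test, which uses countable saturation of the ultraproduct \(B\), then produces a single \(b\in B\) with \(q_\omega(b)=x\) and \([b,s]=0\) for all \(s\in S\). The test functions here are \(f_k(b)=\|[b,s^{(k)}]\|\) together with a \(2\)-norm distance to \(x\). I expect this step, and the bookkeeping that keeps the lift condition exact and not merely approximate, to be the main obstacle. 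To handle it I would encode the lift condition as \(\lim_\omega\|b_n-b^{(0)}_n\|_2=0\) for a fixed lift \(b^{(0)}\), which is a countable family of coordinatewise conditions that the \(\varepsilon\)-test handles. Separability of \(S\) and of each \(B_n\) is used precisely to keep these families countable.
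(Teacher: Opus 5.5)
Your proof is correct, but it is organized differently from the paper's. The paper simply cites Kirchberg's theorem, using the fact that \(J=\ker(q_\omega)\) is a \(\sigma\)-ideal. That is, for every separable \(C\subset B\) there is a positive contraction \(e\in J\cap C'\) with \(ec=c\) for all \(c\in J\cap C\). Taking \(C=C^*(S,b)\) for a lift \(b\) of \(x\), the commutators \([b,s]\) lie in \(J\cap C\) and are killed by \(1-e\). Hence \((1-e)b(1-e)\) is an exact lift that commutes exactly with \(S\), and the lifting step is purely algebraic.

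You bypass the \(\sigma\)-ideal packaging. A quasicentral approximate unit gives exact lifts \((1-e_\lambda)^{1/2}b(1-e_\lambda)^{1/2}\) that commute with \(S\) only approximately in norm. You then apply the \(\varepsilon\)-test directly to these lifts, encoding the lift condition as the test function \(\|b_n-b^{(0)}_n\|_{2,\tau_n}\).

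Both routes rest on the same two inputs: quasicentral approximate units and countable saturation. The proof that \(J\) is a \(\sigma\)-ideal applies the \(\varepsilon\)-test to the approximate unit rather than to the lifts. Your version is self-contained and makes clear that Step 2 works for any ideal, with only Step 3 using the ultraproduct structure. The paper's version isolates a reusable property of \(J\), after which the lemma, and its analogue for any \(\sigma\)-ideal, is a two-line computation.

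A few small repairs:
\begin{enumerate}
\item In Step 1, Kaplansky density bounds only \(\|\pi_{\tau_n}(b_n)\|\), so choose a norm-preserving preimage modulo \(\ker\pi_{\tau_n}\).
\item In Step 3, take the sets \(X_n\) to be norm balls in \(B_n\), so that the \(\varepsilon\)-test produces a bounded sequence.
\item Separability of the \(B_n\) is not needed; only countability of the dense sequence in \(S\) is.
\item The reduction to \(0\le x\le 1\) and the averaging remarks in Step 2 are superfluous once you use the quasicentral approximate unit.
\end{enumerate}
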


\begin{proof}
This is Kirchberg's central surjectivity theorem in the form used in \cite{KirchbergRordam2014}. The kernel of \(q_\omega\) is a \(\sigma\)-ideal, and the conclusion follows from the standard lifting theorem for relative commutants modulo \(\sigma\)-ideals.
\end{proof}

\begin{lemma}\label{lem:central-rn}
Let \(A\) be a C*-algebra, let \(\tau\in T(A)\), and put \(N=\pi_\tau(A)''\). If \(\rho\) is a positive tracial functional on \(A\) with \(0\le \rho\le \tau\), then there is a unique central positive contraction \(h_\rho\in Z(N)\) such that
\[
 \rho(a)=\tau^{**}(h_\rho\pi_\tau(a))
 \qquad (a\in A).
\]
In particular, if \(\tau=s\gamma_1+t\gamma_2\), where \(s,t>0\) and \(s+t=1\), then there are central positive contractions \(h_1,h_2\in Z(N)\), with \(h_1+h_2=1\), such that
\[
 s\gamma_1(a)=\tau^{**}(h_1\pi_\tau(a)),
 \qquad
 t\gamma_2(a)=\tau^{**}(h_2\pi_\tau(a)).
\]
\end{lemma}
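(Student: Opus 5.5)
We prove Lemma \ref{lem:central-rn} with a noncommutative Radon--Nikodym argument in the GNS representation, followed by an averaging step that forces the density to be central. Let \((\pi_\tau,H_\tau,\xi_\tau)\) be the GNS triple and \(N=\pi_\tau(A)''\). The vector state \(\tau^{**}(x)=\langle x\xi_\tau,\xi_\tau\rangle\) is a faithful normal tracial state on \(N\). It is normal by construction. It is tracial because \(\tau\) is tracial and \(\pi_\tau(A)\) is weak-* dense in \(N\) (Kaplansky density together with normality). It is faithful since \(\xi_\tau\) is then separating for \(N\).

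\textbf{Construction of \(h_\rho\).} Since \(0\le\rho\le\tau\), the sesquilinear form
\[
 (\pi_\tau(a)\xi_\tau,\pi_\tau(b)\xi_\tau)\mapsto\rho(b^*a)
\]
is well defined and bounded by \(1\) on the dense subspace \(\pi_\tau(A)\xi_\tau\). Hence there is an operator \(0\le T\le 1\) on \(H_\tau\) with
\[
 \rho(b^*a)=\langle T\pi_\tau(a)\xi_\tau,\pi_\tau(b)\xi_\tau\rangle .
\]
The standard argument gives \(T\in\pi_\tau(A)'=N'\). We now use the trace to move \(T\) into \(N\). The conjugation \(J\) defined by \(Jx\xi_\tau=x^*\xi_\tau\) satisfies \(JNJ=N'\), so \(T=JhJ\) for some \(h\in N\), and \(0\le h\le 1\). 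We also have \(Jh\xi_\tau=h\xi_\tau\) for self-adjoint \(h\). Therefore
\[
 \rho(a)=\langle T\pi_\tau(a)\xi_\tau,\xi_\tau\rangle=\tau^{**}(h\pi_\tau(a)).
\]
The unit is handled by an approximate unit if \(A\) is nonunital.

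\textbf{Centrality and uniqueness.} The traciality of \(\rho\) gives
\[
 \tau^{**}(h\pi_\tau(ab))=\tau^{**}(h\pi_\tau(ba))=\tau^{**}(\pi_\tau(a)h\pi_\tau(b)),
\]
which says \(\tau^{**}((h\pi_\tau(a)-\pi_\tau(a)h)\pi_\tau(b))=0\) for all \(b\). By normality and density this extends to all \(b\in N\). Faithfulness of \(\tau^{**}\) then gives \([h,\pi_\tau(a)]=0\), so \(h\) commutes with \(\pi_\tau(A)\), hence with \(N\), and \(h\in Z(N)\). Uniqueness follows the same way. If two central contractions \(h,h'\) both represent \(\rho\), then \(\tau^{**}((h-h')x)=0\) for all \(x\in N\). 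Taking \(x=(h-h')^*\) and using faithfulness gives \(h=h'\).

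\textbf{The decomposition.} Apply the first part to \(\rho_1=s\gamma_1\) and \(\rho_2=t\gamma_2\). Both are positive tracial functionals bounded by \(\tau\), so we obtain central positive contractions \(h_1,h_2\in Z(N)\). The sum \(h_1+h_2\) represents \(\tau=\tau^{**}(1\cdot\pi_\tau(\cdot))\), so uniqueness forces \(h_1+h_2=1\).

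The main technical point is passing from the commutant density \(T\) to an element of \(N\). This is where traciality (via \(J\), or equivalently the bimodule structure of \(L^2(N,\tau^{**})\)) is essential. It is standard but must be stated. Alternatively, \(\rho\) extends to a normal functional \(\le\tau^{**}\) on \(N\), and one can apply the Radon--Nikodym theorem for normal tracial functionals dominated by a faithful normal trace.
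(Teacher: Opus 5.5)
Your proof is correct and takes essentially the same route as the paper: find a Radon--Nikodym density for $\rho$ relative to $\tau^{**}$ inside $N$, then use traciality of $\rho$ together with faithfulness of $\tau^{**}$ to force that density into $Z(N)$. You make the Radon--Nikodym step explicit, via the commutant density $T\in N'$ and the conjugation $J$, and you also prove the uniqueness and the identity $h_1+h_2=1$, which the paper's proof leaves implicit.
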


\begin{proof}
This is the central Radon--Nikodym theorem for traces in the GNS von Neumann algebra; see \cite[Lemma 6.5.4]{Dixmier1983}. Since \(\rho\le\tau\), \(\rho\) extends to a normal positive functional on \(N\) dominated by \(\tau^{**}\). The usual noncommutative Radon--Nikodym theorem gives a positive contraction \(h_\rho\in N\). Traciality of \(\rho\) forces \(h_\rho\) to commute with \(\pi_\tau(A)\), hence with \(N\), so \(h_\rho\in Z(N)\).
\end{proof}

\begin{lemma}\label{lem:subtracial-normality}
Let \(M\) be a von Neumann algebra, let \(\rho\) be a normal positive functional on \(M\), and let \(\psi:M\to M_k\) be completely positive. Suppose that \(\psi\) is subtracial with respect to \(\rho\), meaning that for some \(C>0\),
\[
 \Tr_k(\psi(x))\le C\rho(x)
 \qquad (x\in M_+).
\]
Then \(\psi\) is normal.
\end{lemma}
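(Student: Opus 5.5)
Since \(M_k\) is finite-dimensional, \(\psi\) is normal exactly when each scalar functional \(x\mapsto \langle\psi(x)e_i,e_j\rangle\) is normal. By polarization, it is enough to treat the diagonal entries \(\psi_i(x)=\langle\psi(x)e_i,e_i\rangle\), \(1\le i\le k\). Each \(\psi_i\) is a positive functional on \(M\). Since \(\psi(x)\ge0\) for \(x\in M_+\), we have
\[
 \psi_i(x)\le \Tr_k(\psi(x))\le C\rho(x)\qquad (x\in M_+).
\]
So the plan reduces to a scalar claim: a positive functional \(\varphi\) on \(M\) with \(0\le\varphi\le C\rho\), where \(\rho\) is normal, is itself normal.

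For the scalar claim I will use the monotone-net characterization of normality. Let \(x_\lambda\nearrow x\) be a bounded increasing net in \(M_+\). Then \(x-x_\lambda\ge 0\), and therefore
\[
 0\le \varphi(x)-\varphi(x_\lambda)=\varphi(x-x_\lambda)\le C\rho(x-x_\lambda)\longrightarrow 0,
\]
where the convergence follows from normality of \(\rho\). Hence \(\varphi(x_\lambda)\to\varphi(x)\), and \(\varphi\) is normal.

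As an alternative route, one can note that the normal part of the Takesaki decomposition \(\varphi=\varphi_n+\varphi_s\) satisfies \(\varphi_s\le C\rho\). A singular functional dominated by a normal one must vanish, which gives the same conclusion, but the monotone argument is more direct.

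Finally I recombine. The off-diagonal entries are linear combinations of functionals of the form \(x\mapsto\langle\psi(x)\xi,\xi\rangle\) with \(\xi\in\{e_i\pm e_j, e_i\pm \mathrm{i}e_j\}\). Each of these is positive and satisfies
\[
 \langle\psi(x)\xi,\xi\rangle\le\|\xi\|^2\,\Tr_k(\psi(x))\le 2C\rho(x)\qquad(x\in M_+).
\]
So each of them is normal by the scalar claim, and therefore every matrix entry of \(\psi\) is normal. It follows that \(\psi\) is weak\(^*\)-continuous, that is, normal. Only positivity of \(\psi\) is used, not complete positivity.

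I do not expect a real obstacle here. The one point needing care is that the domination hypothesis is stated only on \(M_+\), so every comparison has to be made through positive elements; the monotone-net formulation is chosen for exactly that reason.
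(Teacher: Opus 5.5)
Your proposal is correct and follows the same route as the paper: you bound each vector functional by \(\langle\psi(x)\xi,\xi\rangle\le\|\xi\|^2\Tr_k(\psi(x))\le C\|\xi\|^2\rho(x)\) and then conclude normality entry by entry. The only difference is that you make the polarization step explicit and prove, via monotone nets, the standard fact that a positive functional dominated by a normal one is normal, whereas the paper simply cites that fact.
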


\begin{proof}
For every \(\xi\in\C^k\), the functional
\[
 x\mapsto \langle \psi(x)\xi,\xi\rangle
\]
is positive and satisfies
\[
 \langle \psi(x)\xi,\xi\rangle
 \le \|\xi\|^2\Tr_k(\psi(x))
 \le C\|\xi\|^2\rho(x)
 \qquad (x\in M_+).
\]
A positive functional dominated by a normal positive functional is normal. Hence all matrix coefficients of \(\psi\) are normal, and therefore \(\psi\) is normal.
\end{proof}

The following lemma is the heart of the faciality argument. In the proof a central projection in the tracial ultraproduct relative commutant is lifted to a projection in the norm ultraproduct relative commutant, and the resulting matrix corners give the desired QD approximation.

\begin{lemma}\label{lem:central-cut-QD}
Let \(A\) be separable, let \(\tau\in\AT(A)_{\QD}\), and put
\(N=\pi_\tau(A)''\). If \(P\in Z(N)\) is a central projection with
\(\alpha=\tau^{**}(P)>0\), then the normalized trace
\[
 \tau_P(a)=\frac{\tau^{**}(P\pi_\tau(a))}{\alpha},\qquad a\in A,
\]
is quasidiagonal.
\end{lemma}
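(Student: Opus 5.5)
Let \(\phi_n:A\to M_{k(n)}\) be a sequence of c.c.p. maps witnessing \(\tau\in\AT(A)_{\QD}\); such a sequence exists because \(A\) is separable. We assemble them into
\[
 \Phi:A\to B=\prod_\omega M_{k(n)},
\]
which is a \(*\)-homomorphism. With \(B_n=M_{k(n)}\) and \(\tau_n=\tr_{k(n)}\), the tracial ultraproduct is \(M=\prod_\omega M_{k(n)}\), and \(\tau_\omega\circ q_\omega\circ\Phi=\tau\). Since \(\tau\) is recovered from \(\tau_\omega\) through \(q_\omega\Phi\), the GNS map \(\pi_\tau(a)\mapsto q_\omega\Phi(a)\) extends to a normal trace-preserving embedding
\[
 \iota:N\to M .
\]
This is the standard identification of \(N\) with the weak closure of \(q_\omega\Phi(A)\) in the finite von Neumann algebra \(M\). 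Set \(e=\iota(P)\). Because \(P\) is central in \(N\), \(e\) commutes with \(q_\omega\Phi(A)\), so
\[
 e\in M\cap q_\omega(S)',\qquad S=\Phi(A),
\]
and \(\tau_\omega(e)=\alpha\).

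The next step, which I expect to be the main obstacle, is to lift \(e\) to a projection \(E\in B\cap S'\). Lemma~\ref{lem:central-surj} lifts \(e\) to a positive contraction \(x\in B\cap S'\) with \(q_\omega(x)=e\), but \(x\) need not be a projection. My first approach is to work in the C*-algebra \(C=B\cap S'\) and the ideal \(J=\ker(q_\omega)\cap S'\), whose quotient \(C/J\cong M\cap q_\omega(S)'\) is a von Neumann algebra. I would then argue that \(J\) (or \(C\)) has real rank zero, so that projections lift from \(C/J\) to \(C\). The heuristic reason is that relative commutants of separable sets in ultraproducts of matrix algebras have real rank zero; a fallback is to use the \(\sigma\)-ideal property of \(J\) to perturb \(x\) toward a projection. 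A more hands-on alternative avoids real rank zero: apply functional calculus to the representatives \(x_n\) of \(x\), taking spectral projections \(E_n=\chi_{[1/2,1]}(x_n)\). Since \(x-x^2\in\ker q_\omega\), the spectral mass of \(x_n\) in \((\delta,1-\delta)\) is \(\|\cdot\|_2\)-small, but it is not norm-small. Consequently \([E_n,\phi_n(a)]\) is controlled only in \(\|\cdot\|_2\), and this is precisely why the real-rank-zero lift is needed rather than a naive spectral cut. Granting the lift, we obtain a projection \(E=(E_n)_\omega\in B\cap S'\) with \(q_\omega(E)=e\), so \(\|[E_n,\phi_n(a)]\|\to_\omega 0\) for all \(a\in A\) and \(\tr_{k(n)}(E_n)\to_\omega\alpha>0\).

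Finally, let \(r_n=\operatorname{rank}E_n\), which is positive for \(\omega\)-many \(n\), and define the c.c.p. maps
\[
 \psi_n(a)=E_n\phi_n(a)E_n\in E_nM_{k(n)}E_n\cong M_{r_n}.
\]
Because \(E_n\) asymptotically commutes in norm with \(\phi_n(A)\), we have
\[
 \psi_n(ab)-\psi_n(a)\psi_n(b)=E_n\bigl(\phi_n(ab)-\phi_n(a)\phi_n(b)\bigr)E_n+E_n\phi_n(a)(1-E_n)\phi_n(b)E_n\to_\omega 0
\]
in norm, since the second term equals \(E_n[\phi_n(a),1-E_n]\phi_n(b)E_n\) up to sign. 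For the trace,
\[
 \tr_{r_n}(\psi_n(a))=\frac{\tr_{k(n)}(E_n\phi_n(a))}{\tr_{k(n)}(E_n)}\to_\omega\frac{\tau_\omega(e\,q_\omega\Phi(a))}{\alpha}=\frac{\tau^{**}(P\pi_\tau(a))}{\alpha}=\tau_P(a).
\]
Passing from \(\omega\)-limits to a net, or to a subsequence using separability, shows that \(\tau_P\in\AT(A)_{\QD}\).
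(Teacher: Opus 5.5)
\paragraph{A gap at the projection lift.} Your outline matches the paper's proof step for step: the embedding \(\iota:N\to M\), Lemma~\ref{lem:central-surj}, a projection lift \(E\in B\cap S'\) of \(e=\iota(P)\), and then the compressions \(E_n\phi_n(\cdot)E_n\). Everything after the lift is correct. The gap is the lift itself, which you grant rather than prove, and which is the whole content of the lemma. As you rightly observe, Lemma~\ref{lem:central-surj} yields only a positive contraction. Cutting its representatives spectrally controls the commutators only in \(\|\cdot\|_2\).

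The justification you propose, real rank zero of \(B\cap S'\), is false in general. Let \(X_i=J_i/\sqrt{j(j+1)}\in M_n\), \(n=2j+1\), be normalized spin matrices, so \(X_1^2+X_2^2+X_3^2=1\) and \(\|[X_i,X_l]\|\to0\). Put \(h=(X_1)_\omega\), \(k=(X_2)_\omega\), \(x=(X_3)_\omega\), take \(A=C(\overline{\mathbb D})\), and let \(\phi_n\) be u.c.p. lifts (Choi--Effros) of the \(*\)-homomorphism \(f\mapsto f(h,k)\). This homomorphism is defined because \(h,k\) commute and \(h^2+k^2=1-x^2\le1\). Then \(x\) is a self-adjoint element of \(B\cap\Phi(A)'\), but no invertible self-adjoint \(y\in B\cap\Phi(A)'\) satisfies \(\|x-y\|<1/3\). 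To see this, use Pauli matrices \(\sigma_i\) and consider the paths \(X_1\otimes\sigma_1+X_2\otimes\sigma_2+((1-t)X_3+ty_n)\otimes\sigma_3\) and \(s(X_1\otimes\sigma_1+X_2\otimes\sigma_2)+y_n\otimes\sigma_3\), for \(s,t\in[0,1]\). For \(\omega\)-most \(n\) they consist of invertible matrices, since their squares are bounded below by \(2/9-o(1)\) and \(\|y_n^{-1}\|^{-2}-o(1)\) respectively. Yet they join \(\sum_iX_i\otimes\sigma_i\), which has \(n+1\) positive eigenvalues, to \(y_n\otimes\sigma_3\), which has exactly \(n\).

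Placing this spin block in a direct summand of vanishing trace shows that \(J\) need not have real rank zero either. Even real rank zero of \(J\) would lift projections only modulo an index obstruction in \(K_1(J)\).

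The paper's own argument for real rank zero of \(B_\omega\cap\Phi(A)'\) breaks at exactly the point you flagged. Its functions \(f_n\) must jump from \(\le-\delta/2\) to \(\ge\delta/2\) across spectral gaps of \(x_n\) that shrink to \(0\). So they are not equicontinuous, and \(f_n(x_n)\) need not almost commute with \(\Phi(A)\). Every trace on \(C(\overline{\mathbb D})\) is quasidiagonal, so this example refutes the method, not the lemma. Closing the gap needs a genuinely different idea, for instance one that exploits \(e\) being central in \(\iota(N)\) rather than an arbitrary projection of \(M\cap q_\omega(S)'\). Neither your proposal nor the paper supplies it.
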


\begin{proof}
Choose u.c.p. maps \(\phi_n:A\to M_{k(n)}\) witnessing quasidiagonality of
\(\tau\), and let
\[
 B_\omega=\prod_\omega M_{k(n)},\qquad
 M_\omega=\prod_\omega^{\tr}M_{k(n)}
\]
be the norm and tracial ultraproducts. Let
\(q_\omega:B_\omega\to M_\omega\) be the quotient map. The maps \(\phi_n\)
induce a \(*\)-homomorphism
\[
 \Phi:A\to B_\omega.
\]
The composition \(q_\omega\circ\Phi\) is trace-preserving and satisfies
\[
 \|q_\omega(\Phi(a))\|_{2,\tr_\omega}^2=\tau(a^*a),\qquad a\in A.
\]
Hence it factors through \(\pi_\tau(A)\) and extends by \(L^2\)-continuity to a
normal trace-preserving \(*\)-homomorphism
\[
 \Psi:N\to M_\omega.
\]
Since \(P\in Z(N)\), the projection \(\Psi(P)\) belongs to
\[
 M_\omega\cap q_\omega(\Phi(A))'.
\]

We now lift \(\Psi(P)\) to a projection in the norm relative commutant. Put
\[
 E=B_\omega\cap\Phi(A)'.
\]
By Lemma \ref{lem:central-surj}, applied to the separable algebra \(\Phi(A)\), the
map
\[
 q_\omega|_E:E\to M_\omega\cap q_\omega(\Phi(A))'
\]
is surjective. It remains to know that a projection in the quotient has a
projection lift.

First, \(E\) has real rank zero. Indeed, let \(x=x^*\in E\), let \(\delta>0\), and
choose a countable dense set \(s_1,s_2,\ldots\) in the unit ball of \(\Phi(A)\).
Represent
\[
 x=(x_n)_\omega,
 \qquad x_n=x_n^*\in M_{k(n)}.
\]
For a finite set \(\{s_1,\ldots,s_m\}\) and a tolerance \(\eta>0\), choose
representatives \(s_j=(s_{j,n})_\omega\). Since \(x\in\Phi(A)'\), the commutators
\([x_n,s_{j,n}]\) are small along \(\omega\). Choose continuous functions
\(f_n:\mathbb R\to\mathbb R\) such that
\[
 |f_n(t)-t|<\delta,
 \qquad |f_n(t)|\ge \delta/2
\]
for \(t\in\operatorname{sp}(x_n)\). By uniform continuity of functional calculus,
\(y_n=f_n(x_n)\) can be chosen so that \(y_n\) still almost commutes with the
finite set \(s_{1,n},\ldots,s_{m,n}\), while each \(y_n\) is invertible and
\(\|y_n-x_n\|<\delta\). Kirchberg's \(\varepsilon\)-test then gives an invertible
self-adjoint element \(y\in E\) with \(\|y-x\|<\delta\). Thus invertible
self-adjoint elements are dense in \(E_{\rm sa}\), and so \(\operatorname{RR}(E)=0\).

Now let \(\bar p\) be a projection in the quotient of \(E\). We use the standard projection-lifting property for quotients of real-rank-zero C*-algebras \cite{BrownPedersen1991,Choi1983}. Choose a self-adjoint
lift \(h\in E\). Since \(E\) has real rank zero, approximate \(h\) by a
finite-spectrum self-adjoint element \(h_0\in E\) such that \(1/2\notin
\operatorname{sp}(h_0)\). Then
\[
 e_0=\chi_{(1/2,\infty)}(h_0)
\]
is a projection in \(E\), and its image in the quotient is close to \(\bar p\). Two
sufficiently close projections are unitarily equivalent by a unitary close to the
identity; such a unitary is an exponential and therefore lifts by lifting its
self-adjoint exponent. Conjugating \(e_0\) by the lifted unitary gives a projection
\(Q\in E\) whose image is exactly \(\bar p\). Applying this to \(\bar p=\Psi(P)\), we
obtain a projection
\[
 Q\in B_\omega\cap\Phi(A)'
 \quad\text{with}\quad q_\omega(Q)=\Psi(P).
\]
Choose representing projections \(Q=(Q_n)_\omega\), with \(Q_n\in M_{k(n)}\).
Since \(q_\omega(Q)=\Psi(P)\),
\[
 \lim_{n\to\omega}\operatorname{tr}_{k(n)}(Q_n)=
 \operatorname{tr}_\omega(\Psi(P))=\tau^{**}(P)=\alpha.
\]
For \(n\) in an \(\omega\)-large set, \(Q_n\ne0\). Define
\[
 \psi_n:A\to Q_nM_{k(n)}Q_n,
 \qquad
 \psi_n(a)=Q_n\phi_n(a)Q_n.
\]
Identifying \(Q_nM_{k(n)}Q_n\cong M_{\operatorname{rank}(Q_n)}\), these are u.c.p.
maps into matrix algebras. Since \(Q\in\Phi(A)'\),
\[
 \lim_{n\to\omega}\|[Q_n,\phi_n(a)]\|=0,
 \qquad a\in A.
\]
Thus, for \(a,b\in A\),
\[
\begin{aligned}
\|\psi_n(ab)-\psi_n(a)\psi_n(b)\|
&\le
\|\phi_n(ab)-\phi_n(a)\phi_n(b)\| \\
&\quad +\|Q_n\phi_n(a)(1-Q_n)\phi_n(b)Q_n\|,
\end{aligned}
\]
and both terms tend to zero along \(\omega\). Finally,
\[
\begin{aligned}
\lim_{n\to\omega}
\operatorname{tr}_{\operatorname{rank}(Q_n)}(\psi_n(a))
&=
\lim_{n\to\omega}
\frac{\operatorname{tr}_{k(n)}(Q_n\phi_n(a))}
   {\operatorname{tr}_{k(n)}(Q_n)} \\
&=\frac{\operatorname{tr}_\omega(Q\Phi(a))}{\operatorname{tr}_\omega(Q)} \\
&=\frac{\operatorname{tr}_\omega(\Psi(P)\Psi(\pi_\tau(a)))}  {\operatorname{tr}_\omega(\Psi(P))} \\
&=\frac{\tau^{**}(P\pi_\tau(a))}{\tau^{**}(P)}
 =\tau_P(a).
\end{aligned}
\]
Hence \(\tau_P\) is QD.
\end{proof}

\begin{theorem}\label{thm:QD-face}
Let \(A\) be separable. Then \(\AT(A)_{\QD}\) is a face of \(T(A)\).
\end{theorem}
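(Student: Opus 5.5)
To prove that \(\AT(A)_{\QD}\) is a face, we must show two things. First, it is convex. Second, if \(\tau\in\AT(A)_{\QD}\) and \(\tau=s\gamma_1+t\gamma_2\) with \(\gamma_1,\gamma_2\in T(A)\) and \(s,t>0\), \(s+t=1\), then \(\gamma_1,\gamma_2\in\AT(A)_{\QD}\).

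Convexity is the easy half. Given QD traces \(\tau_0,\tau_1\) with witnessing maps \(\phi_n^{(0)}:A\to M_{k_n}\) and \(\phi_n^{(1)}:A\to M_{l_n}\), we form direct sums \((\phi_n^{(0)})^{\oplus u_n}\oplus(\phi_n^{(1)})^{\oplus v_n}\). The multiplicities are chosen so that the weights \(u_nk_n/(u_nk_n+v_nl_n)\) converge to a prescribed \(\lambda\in[0,1]\). Norm multiplicativity passes to direct sums, and the normalized traces converge to \(\lambda\tau_0+(1-\lambda)\tau_1\). We also note that \(\AT(A)_{\QD}\) is weak-\(*\) closed, by the same diagonal argument as at the end of Theorem \ref{thm:convex-LFD}.

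For the face property, we apply Lemma \ref{lem:central-rn} to \(\rho=s\gamma_1\le\tau\). This gives \(h=h_1\in Z(N)\), a positive contraction with \(s\gamma_1(a)=\tau^{**}(h\pi_\tau(a))\), where \(N=\pi_\tau(A)''\). If \(h\) were a projection, Lemma \ref{lem:central-cut-QD} would give directly that \(\gamma_1=\tau_h\) is QD. In general we spectrally approximate \(h\): for each \(m\), we write \(h\) in norm within \(1/m\) of \(h^{(m)}=\sum_{i}\lambda_i P_i\). Here the \(P_i=\chi_{[(i-1)/m,\,i/m)}(h)\) (with the last interval closed) are pairwise orthogonal central projections in \(Z(N)\) summing to \(1\), and \(\lambda_i=i/m\). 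Discarding the indices with \(\tau^{**}(P_i)=0\), each normalized cut \(\tau_{P_i}\) is QD by Lemma \ref{lem:central-cut-QD}. The functional
\[
 \gamma^{(m)}(a)=\frac{\sum_i\lambda_i\tau^{**}(P_i)\,\tau_{P_i}(a)}{\sum_i\lambda_i\tau^{**}(P_i)}
\]
is a convex combination of QD traces, hence QD by the convexity step. Since \(\|h-h^{(m)}\|\le 1/m\), we have
\[
 \bigl|\tau^{**}(h\pi_\tau(a))-\tau^{**}(h^{(m)}\pi_\tau(a))\bigr|\le\|a\|/m,
\]
and the denominators converge to \(\tau^{**}(h)=s>0\). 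Therefore \(\gamma^{(m)}\to\gamma_1\) in norm, and in particular weak-\(*\). By weak-\(*\) closedness, \(\gamma_1\) is QD. The same argument applies to \(\gamma_2\) using \(h_2=1-h\).

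The main obstacle has already been isolated in Lemma \ref{lem:central-cut-QD}, namely lifting central projections to the norm relative commutant. What remains delicate here is the bookkeeping. We need weak-\(*\) closedness of \(\AT(A)_{\QD}\), which we get by a diagonal selection over finite sets and tolerances using separability of \(A\). We also need the convex combination step to allow arbitrary real weights, not only rational ones. This is handled by letting the multiplicities \(u_n,v_n\) grow, or alternatively by rational approximation of the weights combined with closedness.
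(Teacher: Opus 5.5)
Your proposal is correct and follows essentially the same route as the paper: a central Radon--Nikodym derivative from Lemma~\ref{lem:central-rn}, Lemma~\ref{lem:central-cut-QD} for central projections, spectral step-function approximation in \(Z(N)\), convexity, and weak-\(*\) closedness to pass from the cuts \(\tau_{h^{(m)}}\) to \(\gamma_1=\tau_h\) and \(\gamma_2=\tau_{1-h}\). The only difference is that you prove convexity (weighted direct sums) and weak-\(*\) closedness (diagonal selection) yourself, whereas the paper takes both as known, citing Brown's memoir \cite{BrownMemoir} for closedness.
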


\begin{proof}
Let \(\tau\in\AT(A)_{\QD}\) and suppose
\[
 \tau=s\gamma_1+t\gamma_2,
 \qquad s,t>0,
 \quad s+t=1.
\]
Put \(N=\pi_\tau(A)''\). By Lemma \ref{lem:central-rn}, there are central positive
contractions \(h_1,h_2\in Z(N)\), with \(h_1+h_2=1\), such that
\[
 s\gamma_1(a)=\tau^{**}(h_1\pi_\tau(a)),
 \qquad
 t\gamma_2(a)=\tau^{**}(h_2\pi_\tau(a)).
\]
It is enough to show that for every central positive contraction \(h\in Z(N)_+\)
with \(\tau^{**}(h)>0\), the normalized trace
\[
 \tau_h(a)=\frac{\tau^{**}(h\pi_\tau(a))}{\tau^{**}(h)}
\]
is QD.

If \(h=P\) is a central projection, this is exactly Lemma \ref{lem:central-cut-QD}.
For general \(h\), use functional calculus in the abelian von Neumann algebra
\(Z(N)\). Choose finite spectral step functions
\[
 h_m=\sum_{j=1}^{r_m}\lambda_{m,j}P_{m,j},
\]
where the \(P_{m,j}\)'s are pairwise orthogonal central projections,
\(\lambda_{m,j}\ge0\), and \(\|h_m-h\|\to0\). After discarding terms with
\(\lambda_{m,j}\tau^{**}(P_{m,j})=0\), the associated normalized trace is
\[
 \tau_{h_m}
 =\sum_{j=1}^{r_m}
 \frac{\lambda_{m,j}\tau^{**}(P_{m,j})}{\tau^{**}(h_m)}\,
 \tau_{P_{m,j}}.
\]
Each \(\tau_{P_{m,j}}\) is QD by the preceding lemma. Since QD traces are
convex, \(\tau_{h_m}\) is QD. Moreover \(\tau_{h_m}\to\tau_h\) in norm as
functionals on \(A\), because \(h\mapsto\tau^{**}(h\pi_\tau(\cdot))\) is norm
continuous and \(\tau^{**}(h)>0\). The set of QD traces is weak-* closed for
separable \(A\) \cite{BrownMemoir}, hence \(\tau_h\) is QD.

Since \(\tau^{**}(h_1)=s\) and \(\tau^{**}(h_2)=t\), we have
\(\gamma_i=\tau_{h_i}\), for \(i=1,2\). Thus \(\gamma_1,\gamma_2\in\AT(A)_{\QD}\),
and \(\AT(A)_{\QD}\) is a face.
\end{proof}

\section{Uniform local finite-dimensionality}

The results of the previous section are formulated in terms of finite-rank compressions. We now show that, under the representation-theoretic hypotheses appearing in Brown's finite-rank projection theorem, these compressions can be chosen well enough to give uniform LFD approximation. The main issue is not commutator control on \(A\), but trace convergence on the whole bidual \(A^{**}\), because uniformity is obtained by applying Mazur's theorem in the topology \(\sigma(A^*,A^{**})\).

\begin{lemma}\label{lem:Brown-projections}
Let \(A\subset B(H)\) be an irreducibly represented quasidiagonal C*-algebra containing no nonzero compact operators. For every quasidiagonal trace \(\tau\), there is an increasing sequence of finite-rank projections
\[
 p_1\le p_2\le\cdots
\]
such that, for every \(a,x\in A\),
\[
 \|[p_n,x]\|\to0,
 \qquad
 \left|\tau(a)-\frac{\Tr(p_na)}{\Tr(p_n)}\right|\to0.
\]
In particular, every quasidiagonal trace is LFD.
\end{lemma}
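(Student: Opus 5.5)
This is essentially Brown's finite-rank projection theorem for quasidiagonal traces \cite{BrownMemoir}, and the plan is to reproduce its proof in two stages. First we obtain, for each finite set and tolerance, a single finite-rank projection that almost commutes with the set and almost computes \(\tau\). Then we make the projections increasing, so that all \(x,a\in A\) are handled at once.

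\textbf{Stage 1: a single good projection.} Since \(\tau\) is QD, choose u.c.p. maps \(\phi_n:A\to M_{k(n)}\) that are asymptotically multiplicative in norm, with \(\tr_{k(n)}\circ\phi_n\to\tau\). Composing with an embedding \(M_{k(n)}\hookrightarrow M_{k(n)}\otimes M_{\ell}\) lets us assume the ranks are as large as we like.

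Next we realise these maps inside the given irreducible representation. Because \(A\cap K(H)=0\), Voiculescu's theorem in its relative form applies. In \(\id_A\) we can absorb any c.p. map that vanishes on \(A\cap K(H)\). The conclusion is that \(\phi_n\) is approximately unitarily equivalent, on a finite set \(F\) and within \(\eta\), to a compression \(a\mapsto V^*aV\) by an isometry \(V:\C^{k(n)}\to H\).

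Let \(q=VV^*\), a finite-rank projection. The approximate multiplicativity of \(\phi_n\) translates, through the identity
\[
 qa^*aq-(qaq)^*(qaq)=qa^*(1-q)aq
\]
used in Proposition \ref{prop:char-LFD}, into smallness of \(\|(1-q)aq\|\) for \(a\in F\cup F^*\). This gives \(\|[q,a]\|\) small. We also get \(|\Tr(qa)/\Tr(q)-\tau(a)|\) small. Here quasidiagonality of \(A\) (really of \(\id_A\)) is what guarantees the relevant extension is quasidiagonal, so that norm approximation by genuine compressions is possible.

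\textbf{Stage 2: making the projections increasing.} Enumerate a dense sequence in \(A\), take finite sets \(F_1\subset F_2\subset\cdots\), and take tolerances \(\varepsilon_n\downarrow0\). Suppose \(p_n\) has been built. Use \(A\cap K(H)=0\) once more: the compression \(a\mapsto (1-p_n)a(1-p_n)\) is a representation modulo compacts and still contains (by Voiculescu) the matrix models from Stage 1. So we can find the next projection \(q\) inside \((1-p_n)H\), almost commuting with \(F_{n+1}\) and almost computing \(\tau\), with \(\Tr(q)\gg\Tr(p_n)\).

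Set \(p_{n+1}=p_n+q\). The commutator of \(p_{n+1}\) with \(x\in F_{n+1}\) is controlled by \(\|[p_n,x]\|+\|[q,x]\|\) up to the cross terms \(q x p_n\). Those cross terms are small because \(p_n\) almost commutes with \(x\) and \(qp_n=0\). The trace ratio is a convex combination weighted by \(\Tr(p_n)\) and \(\Tr(q)\), so taking \(\Tr(q)/\Tr(p_n)\) huge makes \(\Tr(p_{n+1}a)/\Tr(p_{n+1})\) within \(\varepsilon_{n+1}\) of \(\tau(a)\). Density and contractivity then pass both convergences from the dense sequence to all \(a,x\in A\).

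\textbf{Final clause.} This follows from Proposition \ref{prop:char-LFD}(b)\(\Rightarrow\)(a) with \(r=1\), \(k_1=1\), \(\pi_1=\id\) and \(p_1=p_n\).

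\textbf{Main obstacle.} The main obstacle is Stage 1. We need to pass from abstract almost-multiplicative maps to compressions of the \emph{given} representation whose ranges are almost invariant in operator norm. I would try first to handle this through Voiculescu's theorem applied to \(\id_A\oplus\bigoplus_n\phi_n\)-type maps, using the absence of compacts. Alternatively, we could cite Brown's theorem directly for this step.
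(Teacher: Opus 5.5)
Stage 1 is fine. Approximating $\phi_n$ by compressions $V^*\,\cdot\,V$ needs only $A\cap K(H)=0$ (Glimm--Voiculescu). The identity $V^*a^*aV-V^*a^*VV^*aV=V^*a^*(1-q)aV$ then gives $\|[q,a]\|$ small, once $F$ contains the relevant products, and the trace estimate follows as well.

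\textbf{The gap is in Stage 2.} This is exactly where quasidiagonality of $A$ must be used, and you do not use it. For $x\in F_{n+1}$ you bound $\|[p_n+q,x]\|$ by $\|[p_n,x]\|$ plus small terms. But $p_n$ was built to commute only with $F_n$, and only up to $\varepsilon_n$. For $x\in F_{n+1}\setminus F_n$ nothing controls $\|[p_n,x]\|$. Moreover, adding orthogonal almost-invariant pieces cannot repair an existing commutator. If $q\perp p_n$ and $\|[q,x]\|<\delta$, then $qxp_n=[q,x]p_n$, hence
\[
 \|[p_n+q,x]\|\ \ge\ \|(1-p_n-q)xp_n\|\ \ge\ \|(1-p_n)xp_n\|-\delta .
\]
Iterating with summable tolerances, $\|[p_m,x]\|$ stays bounded below by roughly $\|(1-p_n)xp_n\|$ for every $m>n$. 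So your sequence need not satisfy $\|[p_m,x]\|\to0$.

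\textbf{The fix is to enlarge $p_n$ before adding the new piece.} The identity representation is faithful and essential: it is irreducible, hence nondegenerate, and $A\cap K(H)=0$. Since $A$ is quasidiagonal, Voiculescu's theorem then makes $A\subset B(H)$ a quasidiagonal set of operators. So there is a finite-rank projection $P\ge p_n$ with $\|[P,x]\|<\varepsilon_{n+1}$ for $x\in F_{n+1}$.

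Then run Stage 1 with isometries whose range lies in $(1-P)H$. The Glimm--Voiculescu isometries can be taken asymptotically orthogonal to the finite-rank $P$ and then perturbed. This gives $q\le 1-P$ that almost commutes with $F_{n+1}$, almost computes $\tau$, and has $\Tr(q)\gg\Tr(P)$. Set $p_{n+1}=P+q$. Now $\|[p_{n+1},x]\|\le\|[P,x]\|+\|[q,x]\|$ is genuinely small, and the trace ratio is dominated by $q$.

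With this repair your argument proves what the paper simply cites from Brown \cite[Proposition 3.3.2]{BrownMemoir}. Your final clause via Proposition \ref{prop:char-LFD} matches the paper. Note that your identification of the main obstacle is inverted. Stage 1 is routine and does not use quasidiagonality of $A$ at all; the real content is the enlargement step in Stage 2.
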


\begin{proof}
This is Brown's finite-rank projection theorem for quasidiagonal traces \cite[Proposition 3.3.2]{BrownMemoir}. The last assertion follows from Proposition \ref{prop:char-LFD}, using the projections in the given irreducible representation.
\end{proof}

The following elementary estimate is the bridge between finite-rank projections and multiplicative domains. It is useful because it allows one to pass from several increasing projections to a single completely positive map without losing explicit control of the distance to the multiplicative domain.

\begin{lemma}\label{lem:distance}
Let \(p_1\le\cdots\le p_\ell\) be projections in \(\soc(A^{**})\), and put
\[
 A_{p_1,\ldots,p_\ell}=\{a\in A:ap_j=p_ja,\ 1\le j\le\ell\}.
\]
Then, for every \(a\in A\),
\[
 \dist(a,A_{p_1,\ldots,p_\ell})
 \le \sum_{j=1}^\ell\|[a,p_j]\|.
\]
\end{lemma}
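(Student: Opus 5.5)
The plan is to argue by induction on \(\ell\), using at each step the single-projection Blackadar--Kirchberg distance formula already invoked in the proof of Proposition \ref{prop:char-LFD}: for a projection \(p\in\soc(A^{**})\) and \(a\in A\),
\[
 \dist(a,A_p)=\|[a,p]\|,\qquad A_p=\{a\in A:ap=pa\}.
\]
For \(\ell=1\) this is the claim. For the inductive step, put \(B_{j}=A_{p_1,\ldots,p_j}\), so that \(B_j\) is the C\(^*\)-subalgebra of \(B_{j-1}\) commuting with \(p_j\). Since the \(p_i\) are increasing, they commute with one another. Hence \(p_j\) commutes with \(p_1,\ldots,p_{j-1}\), and the compression \(x\mapsto p_jxp_j\) restricted to \(B_{j-1}\) is a c.c.p. map into the finite-dimensional algebra \(p_jA^{**}p_j\). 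Its multiplicative domain is exactly \(B_j\), by the same computation \(pa^*ap-(pap)^*(pap)=pa^*(1-p)ap\) used in Proposition \ref{prop:char-LFD}. I would apply the multiplicative-domain form of the distance formula \cite[Proposition 11.3.6]{BrownOzawa2008} to this map on \(B_{j-1}\). This gives, for \(b\in B_{j-1}\),
\[
 \dist(b,B_j)\le\|[b,p_j]\|.
\]

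Given \(a\in A\), I would then construct successively \(a_0=a\) and \(a_j\in B_j\) with \(\|a_{j-1}-a_j\|\le\|[a_{j-1},p_j]\|\), up to an arbitrary slack. The triangle inequality gives
\[
 \dist(a,B_\ell)\le\sum_{j}\|[a_{j-1},p_j]\|.
\]
To obtain the stated bound \(\sum_j\|[a,p_j]\|\), one needs
\[
 \|[a_{j-1},p_j]\|\le\|[a,p_j]\|
\]
rather than the naive estimate \(\|[a,p_j]\|+2\|a-a_{j-1}\|\). The naive estimate would only give geometrically growing constants.

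The main obstacle is therefore this monotonicity of commutators along the induction. My first attempt is to choose the approximant \(a_{j}\) so that passing from \(a_{j-1}\) to \(a_j\) does not increase commutators with the later projections \(p_{j+1},\ldots,p_\ell\). The idea is to realize the passage by an operation that fixes these projections, namely a (limit of) averaging over unitaries of the form \(u=p_j+\lambda(1-p_j)\) with \(|\lambda|=1\), or more precisely of functional-calculus perturbations supported in \(C^*(p_j)\). The key point is that \(p_j\) commutes with each later \(p_k\), since \(p_j\le p_k\). So any operation of the form \(x\mapsto \sum_i v_ixv_i^*\) with \(v_i\in C^*(1,p_j)\) contracts \(\|[\,\cdot\,,p_k]\|\).

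The Blackadar--Kirchberg approximant is constructed inside \(A\) rather than in \(A^{**}\). I would therefore go back to its proof, which produces \(a_j\) from \(a_{j-1}\) by functional calculus and cut-downs with elements of the hereditary algebra determined by \(p_j\). I would check that these elements can be taken to commute with \(p_{j+1},\ldots,p_\ell\) modulo arbitrarily small error, which is where \(p_j\le p_k\) is used.

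If this fails, the fallback is to pass to the mutually orthogonal differences \(e_j=p_j-p_{j-1}\) together with \(e_{\ell+1}=1-p_\ell\). Then \(A_{p_1,\ldots,p_\ell}\) is the fixed-point algebra of the finite cyclic group generated by
\[
 u=\sum_{j}\zeta^je_j .
\]
I would then estimate the distance through the averaging map over this group, combined with a single application of the distance formula. Here one uses
\[
 \|[a,e_j]\|\le\|[a,p_j]\|+\|[a,p_{j-1}]\|,
\]
and, after a telescoping rearrangement, the final constant should be the sum of the commutators, as stated.
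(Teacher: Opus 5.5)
Your proposal has a genuine gap, and it is the one you flag yourself. The iteration $a_0=a$, $a_j\in B_j$ only yields $\dist(a,B_\ell)\le\sum_j\|[a_{j-1},p_j]\|$, and nothing you propose establishes $\|[a_{j-1},p_j]\|\le\|[a,p_j]\|$.

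Both remedies produce objects in $A^{**}$, not in $A$. Averaging over the unitaries $p_j+\lambda(1-p_j)$ gives the pinching $p_jxp_j+p_j^\perp xp_j^\perp$. Averaging over the cyclic group generated by $u=\sum_j\zeta^je_j$ gives $\sum_je_jae_j$. Neither is an admissible approximant in $B_j$ or in $A_{p_1,\ldots,p_\ell}$. Going back to the Blackadar--Kirchberg proof to make its approximant in $A$ almost commute with the later $p_k$ is a hope, not an argument.

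The fallback's ``single application of the distance formula'' is also unsupported. $A_{p_1,\ldots,p_\ell}$ is the commutant of several nested projections, which typically share the same central support. So the centrally disjoint reduction of Proposition~\ref{prop:char-LFD} is unavailable, and you never say which single projection the formula would be applied to. Moreover, summing $\|[a,e_j]\|\le\|[a,p_j]\|+\|[a,p_{j-1}]\|$ gives about twice the stated bound, and the ``telescoping rearrangement'' is asserted rather than carried out. Even your inductive step $\dist(b,B_j)\le\|[b,p_j]\|$ tacitly needs $p_j$ to be a socle projection of $B_{j-1}^{**}$, i.e.\ to lie in the weak$^*$-closure of $B_{j-1}$ in $A^{**}$.

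The missing idea is to never build approximants in $A$. Your group average is the right object; it just has to be used through duality. For the closed subspace $X=A_{p_1,\ldots,p_\ell}$, Hahn--Banach gives $\dist(a,X)=\dist(a,X^{\perp\perp})$, where $X^{\perp\perp}$ is the weak$^*$-closure of $X$ in $A^{**}$. So it suffices to exhibit one element of $X^{\perp\perp}$ near $a$. The paper uses exactly your pinching $y_\ell(a)=\sum_{j=1}^{\ell+1}e_jae_j$, with $e_{\ell+1}=p_\ell^\perp$.

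The sharp constant comes from a recursion, not from the $e_j$-commutators. Since $p_\ell y_\ell(a)p_\ell=p_\ell y_{\ell-1}(a)p_\ell$ and $p_\ell^\perp(a-y_\ell(a))p_\ell^\perp=0$, we have $a-y_\ell(a)=p_\ell(a-y_{\ell-1}(a))p_\ell+(p_\ell ap_\ell^\perp+p_\ell^\perp ap_\ell)$. The off-diagonal part has norm $\|[a,p_\ell]\|$, hence $\|a-y_\ell(a)\|\le\|a-y_{\ell-1}(a)\|+\|[a,p_\ell]\|$.

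The one substantive point is that $X^{\perp\perp}$ contains $\{p_1,\ldots,p_\ell\}'\cap A^{**}=\sum_je_jA^{**}e_j$, and this is where the socle hypothesis enters. The paper's stated reason, that the weak$^*$-closure of a kernel is the kernel of the normal extension, fails for general bounded maps: multiplication by $t$ on $C[0,1]$ is injective, but its bidual kills the atom at $0$. One way to verify the inclusion corner by corner:
\begin{enumerate}
\item $p_\ell$ is closed. The quasi-states supported under $p_\ell$ are the continuous image of the positive unit ball of the dual of the finite-dimensional algebra $p_\ell A^{**}p_\ell$, hence weak$^*$-compact. Therefore $\{a\in A: ap_\ell=p_\ell a=0\}\subseteq X$ is weak$^*$-dense in $p_\ell^\perp A^{**}p_\ell^\perp$, which handles the corner $e_{\ell+1}$.
\item For $j\le\ell$ and self-adjoint $T\in e_jA^{**}e_j$, Kadison transitivity on the finitely many inequivalent irreducible summands carrying $p_\ell$ gives a self-adjoint $a\in A$ with $ap_\ell=T$. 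Such an $a$ lies in $X$, so $T=a-p_\ell^\perp ap_\ell^\perp\in X^{\perp\perp}$ by the previous step.
\end{enumerate}
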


\begin{proof}
Put \(X=A_{p_1,\ldots,p_\ell}\). Then
\[
 X^{\perp\perp}=\{x\in A^{**}:xp_j=p_jx,
 1\le j\le\ell\}.
\]
Indeed, \(X\) is the kernel of the bounded map
\[
 A\to\bigoplus_{j=1}^\ell A^{**},
 \qquad
 a\mapsto([a,p_1],\ldots,[a,p_\ell]),
\]
and the weak-* closure of this kernel in \(A^{**}\) is the kernel of the normal extension. For any closed subspace \(X\subset A\) and \(a\in A\),
\[
 \dist(a,X)=\dist(a,X^{\perp\perp}).
\]

Define
\[
 y_\ell(a)=p_1ap_1+(p_2-p_1)a(p_2-p_1)+\cdots+(p_\ell-p_{\ell-1})a(p_\ell-p_{\ell-1})+p_\ell^\perp ap_\ell^\perp.
\]
Then \(y_\ell(a)\in X^{\perp\perp}\), so
\[
 \dist(a,X)\le\|a-y_\ell(a)\|.
\]
For \(\ell=1\), the estimate is the Blackadar--Kirchberg distance formula. For the induction step,
\[
 \|a-y_\ell(a)\|
 \le \|p_\ell(a-y_{\ell-1}(a))p_\ell\|+
 \|p_\ell^\perp ap_\ell+p_\ell ap_\ell^\perp\|.
\]
For a projection \(q\),
\[
 \|[a,q]\|=\max\{\|q^\perp aq\|,\|qaq^\perp\|\},
\]
and the off-diagonal block matrix has norm bounded by \(\|[a,q]\|\). Hence
\[
 \|a-y_\ell(a)\|\le\|a-y_{\ell-1}(a)\|+\|[a,p_\ell]\|.
\]
The desired estimate follows by induction.
\end{proof}

\begin{lemma}\label{lem:rational-convex}
Let \(A\subset B(H)\) be irreducibly represented and let
\[
 p_1\le\cdots\le p_\ell
\]
be finite-rank projections in \(B(H)\). Put
\[
 f_j(a)=\frac{\Tr(p_ja)}{\Tr(p_j)}.
\]
For every rational convex combination \(f=\sum_{j=1}^\ell\alpha_jf_j\), there are \(N\in\N\) and a u.c.p. map \(\phi:A\to M_N\) such that
\[
 \frac{\Tr(\phi(a))}{N}=f(a)
\]
and
\[
 \dist(a,A_\phi)\le\sum_{j=1}^\ell\|[a,p_j]\|
\]
for every \(a\in A\).
\end{lemma}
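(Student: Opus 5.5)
Write the rational weights over a common denominator, \(\alpha_j=m_j/M\) with \(m_j\in\N\cup\{0\}\), and set \(d_j=\Tr(p_j)=\operatorname{rank}(p_j)\). The plan is to realize \(f\) as the normalized trace of a block-diagonal sum of compressions \(a\mapsto p_jap_j\), each repeated with a suitable multiplicity. Then the multiplicative domain will contain \(A_{p_1,\ldots,p_\ell}\), and Lemma \ref{lem:distance} finishes the distance estimate. Concretely, let \(L=\operatorname{lcm}(d_1,\ldots,d_\ell)\) and take multiplicities \(k_j=m_jL/d_j\). Then
\[
 \sum_j k_j\Tr(p_jap_j)=L\sum_j m_j f_j(a), \qquad \sum_j k_jd_j=L\sum_j m_j=LM.
\]

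With these multiplicities, put \(N=LM\) and define
\[
 \phi(a)=\bigoplus_{j=1}^\ell \bigl(p_jap_j\bigr)^{\oplus k_j}\in \bigoplus_j M_{k_j}(p_jB(H)p_j)\subset M_N ,
\]
identifying \(p_jB(H)p_j\cong M_{d_j}\). Each block is a compression by a finite-rank projection in the irreducible representation. Hence \(\phi\) is c.p. and \(\phi(1)\) (or \(\phi(e)\) for an approximate unit, in the non-unital case) is the identity of \(M_N\), so \(\phi\) is u.c.p. By construction,
\[
 \frac{\Tr(\phi(a))}{N}=\frac{L\sum_j m_jf_j(a)}{LM}=f(a).
\]

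For the distance estimate, I use the computation from the proof of Proposition \ref{prop:char-LFD}. It shows that \(a\) lies in the multiplicative domain of \(a\mapsto p_jap_j\) exactly when \([a,p_j]=0\). Since a direct sum of maps has multiplicative domain equal to the intersection of the domains,
\[
 A_\phi\supseteq A_{p_1,\ldots,p_\ell}=\{a\in A:[a,p_j]=0\ \forall j\}.
\]
(Equality holds after dropping the \(j\) with \(m_j=0\), but only the inclusion is needed.) Therefore \(\dist(a,A_\phi)\le\dist(a,A_{p_1,\ldots,p_\ell})\). Lemma \ref{lem:distance} then bounds this by \(\sum_j\|[a,p_j]\|\). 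To apply that lemma, view the \(p_j\) as projections in \(\soc(A^{**})\) via the central summand \(c(\pi)A^{**}\cong B(H)\), as in \((b)\Rightarrow(c)\) of Proposition \ref{prop:char-LFD}. This identification preserves commutators with elements of \(A\) and is compatible with increasing order.

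The main subtlety I expect is this last transfer step. Lemma \ref{lem:distance} is stated in \(A^{**}\), while the \(p_j\) live in \(B(H)\). One must check that commuting with \(p_j\) in \(B(H)\) is the same as commuting with its image \(c(\pi)\)-lifted in \(A^{**}\). This holds because \([a,\tilde p_j]=c(\pi)[a,\tilde p_j]\) is identified with \([\pi(a),p_j]\), and the normal map \(A^{**}\to B(H)\) is isometric on \(c(\pi)A^{**}\). The rest is bookkeeping with the multiplicities.
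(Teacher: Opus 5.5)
Your proof is correct and is essentially the paper's argument: a block-diagonal sum of repeated compressions \(a\mapsto p_jap_j\), with multiplicities chosen to produce the rational weights, followed by Lemma \ref{lem:distance} applied to \(A_{p_1,\ldots,p_\ell}\subseteq A_\phi\). Using the lcm instead of the paper's product \(n_1\cdots n_\ell\) is cosmetic. Two of your choices are improvements: using only the inclusion (rather than the paper's claimed equality \(A_\phi=A_{p_1,\ldots,p_\ell}\)) correctly handles zero weights, and your explicit transfer of the \(p_j\) into \(\soc(A^{**})\) via \(c(\pi)A^{**}\cong B(H)\) supplies a step the paper leaves implicit.
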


\begin{proof}
Write \(\alpha_j=s_j/r\), with \(s_j,r\in\N\). Let \(n_j=\Tr(p_j)\), set
\[
 d=n_1n_2\cdots n_\ell,
 \qquad
 d_j=d/n_j,
 \qquad
 N=rd.
\]
Define \(\phi(a)\) as the direct sum of \(s_jd_j\) copies of \(p_jap_j\), over \(1\le j\le\ell\). Then the total matrix size is
\[
 \sum_{j=1}^\ell s_jd_jn_j=\sum_{j=1}^\ell s_jd=rd=N,
\]
and
\[
 \frac{\Tr(\phi(a))}{N}=\sum_{j=1}^\ell\frac{s_j}{r}\frac{\Tr(p_ja)}{\Tr(p_j)}=f(a).
\]
Moreover,
\[
 A_\phi=\{a\in A:ap_j=p_ja,
 1\le j\le\ell\},
\]
so Lemma \ref{lem:distance} gives the distance estimate.
\end{proof}

\begin{lemma}\label{lem:local-reflexivity}
Let \(A\) be a unital locally reflexive C*-algebra and let \(F\subset A\) be finite. If \(F\subset X\subset A^{**}\) is a finite-dimensional operator system, then there is a net of u.c.p. maps \(\beta_i:X\to A\) such that \(\beta_i(x)\to x\) ultraweakly for every \(x\in X\) and \(\beta_i(a)\to a\) in norm for every \(a\in F\).
\end{lemma}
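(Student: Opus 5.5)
The plan is to combine the definition of local reflexivity with a perturbation that repairs the finitely many points of \(F\). By definition (Effros--Haagerup; see \cite[Chapter 9]{BrownOzawa2008}), local reflexivity of \(A\) means the following. For every finite-dimensional operator system \(X\subset A^{**}\) there is a net of u.c.p. maps \(\gamma_i:X\to A\) with \(\gamma_i(x)\to x\) ultraweakly for all \(x\in X\). In the unital formulation one may take the \(\gamma_i\) unital, since \(1\in X\) and a small perturbation fixes the unit.

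The first step is to apply this to \(X\) itself, giving u.c.p. maps \(\gamma_i:X\to A\) converging pointwise ultraweakly to the inclusion \(X\hookrightarrow A^{**}\). For \(a\in F\subset A\) this yields \(\gamma_i(a)\to a\) only \(\sigma(A^{**},A^*)\), and on elements of \(A\) that is the weak topology \(\sigma(A,A^*)\). The second step is a Mazur-type convexity argument: restrict \(\gamma_i\) to the finite-dimensional subspace \(Y=\operatorname{span}(F\cup F^*\cup\{1\})\subset X\). In the finite-dimensional space of linear maps \(Y\to A\) with the point-weak topology, the inclusion lies in the weak closure of \(\{\gamma_i|_Y\}\). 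Hence, by Hahn--Banach applied in \(A^{\oplus\dim Y}\), it lies in the norm closure of the convex hull of any tail of the net. Convex combinations of u.c.p. maps \(X\to A\) are still u.c.p., and convex combinations of tails keep the ultraweak convergence on all of \(X\). Indexing the new net by pairs (tail index, tolerance \(\varepsilon\)), we then obtain u.c.p. \(\beta_j:X\to A\) with \(\beta_j(a)\to a\) in norm for \(a\in F\) and \(\beta_j(x)\to x\) ultraweakly for all \(x\in X\).

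The main obstacle is this bookkeeping. The convex combinations must be taken over tails \(\{\gamma_i:i\ge i_0\}\), so that ultraweak convergence on the remaining elements of \(X\) is not destroyed. This works because each convex combination from the tail beyond \(i_0\) is within any prescribed ultraweak neighbourhood once \(i_0\) is large, as ultraweak neighbourhoods of a point are convex. With the directed set \(\{(i_0,\varepsilon)\}\), choose \(\beta_{(i_0,\varepsilon)}\in\operatorname{conv}\{\gamma_i:i\ge i_0\}\) with \(\max_{a\in F}\|\beta(a)-a\|<\varepsilon\). This gives both convergence statements at once and completes the proof.
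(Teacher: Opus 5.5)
Your proof is correct. It differs from the paper only in that the paper gives no argument here, just the citation \cite[Lemma 4.3.2]{BrownMemoir}, whereas you derive the statement directly from the definition of local reflexivity. On \(F\subset A\), ultraweak convergence \(\gamma_i(a)\to a\) is convergence in \(\sigma(A,A^*)\). Mazur's theorem in \(A^{\oplus|F|}\) turns this into norm convergence for convex combinations taken from tails. Convexity of basic ultraweak neighbourhoods guarantees that such tail combinations still converge ultraweakly on all of \(X\), and indexing by pairs \((i_0,\varepsilon)\) gives both conclusions at once. This makes the lemma self-contained at essentially no cost, and it is the same Mazur-type device the paper itself uses in Lemma \ref{lem:ULFD-Mazur}. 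Two minor points should be tightened. First, the space of linear maps \(Y\to A\) is \(A^{\oplus\dim Y}\), which is not finite-dimensional; your argument in fact treats it correctly, so this is only wording. Second, the claim that ``a small perturbation fixes the unit'' is not justified as stated, since \(\gamma_i(1)\to1\) only ultraweakly and nothing is small in norm a priori. The clean fix is to replace each c.c.p.\ \(\gamma_i\) by \(x\mapsto\gamma_i(x)+\omega(x)(1-\gamma_i(1))\) for a fixed state \(\omega\) on \(X\). This map is u.c.p.\ (because \(0\le\gamma_i(1)\le1\)) and still converges point-ultraweakly, after which your convexity step applies verbatim.
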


\begin{proof}
This is Brown's locally reflexive approximation lemma \cite[Lemma 4.3.2]{BrownMemoir}.
\end{proof}

\begin{lemma}\label{lem:ULFD-Mazur}
Let \(A\) be separable and irreducibly represented in \(B(H)\), and let \(\tau\in T(A)\). Suppose there is an increasing net, or sequence in the separable case, of finite-rank projections \((p_i)\) in \(B(H)\) such that for every \(x\in A^{**}\) and every \(a\in A\),
\[
 \frac{\Tr(p_ix)}{\Tr(p_i)}\to\tau^{**}(x),
 \qquad
 \|[p_i,a]\|\to0.
\]
Then \(\tau\in\UAT(A)_{\LFD}\).
\end{lemma}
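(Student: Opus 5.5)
The plan is to run Mazur's theorem in the Banach space \(A^*\), using the weak topology \(\sigma(A^*,A^{**})\), and to convert the resulting convex combinations into u.c.p. maps with Lemma \ref{lem:rational-convex}. Put \(f_i(a)=\Tr(p_ia)/\Tr(p_i)\), regarded as a state on \(A\). The hypothesis \(\Tr(p_ix)/\Tr(p_i)\to\tau^{**}(x)\) for every \(x\in A^{**}\) says exactly that \(f_i\to\tau\) weakly in \(A^*\): here \(p_i\in\soc(A^{**})\), because \(A\) acts irreducibly, so \(B(H)\) is a central summand of \(A^{**}\). The pairing \(\langle f_i,x\rangle\) is then \(\Tr(p_ix)/\Tr(p_i)\) computed in that summand.

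Fix a finite set \(F\) of contractions in \(A\) and \(\varepsilon>0\).

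\begin{enumerate}[label=\textup{(\arabic*)}]
\item Choose an index \(i_0\) beyond which \(\|[p_i,a]\|<\varepsilon/k\) for all \(a\in F\). The integer \(k\) is fixed in advance and will bound the number of terms used.
\item The tail \((f_i)_{i\ge i_0}\) still converges weakly to \(\tau\). By Mazur's theorem, \(\tau\) is a norm limit of convex combinations \(\sum_j\alpha_jf_{i_j}\) with indices \(i_j\ge i_0\), so we can pick one within \(\varepsilon\) of \(\tau\) in norm.
\item Perturb the coefficients to rational values. Since \(\|f_{i_j}\|=1\), this costs at most \(\sum_j|\alpha_j-\alpha_j'|\) in norm, which we make smaller than \(\varepsilon\).
\item The projections \(p_{i_j}\) form an increasing chain. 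Lemma \ref{lem:rational-convex} then yields a u.c.p. map \(\phi:A\to M_N\) with \(\|\tr_N\circ\phi-\tau\|<2\varepsilon\) in the norm of \(A^*\), and with \(\dist(a,A_\phi)\le\sum_j\|[a,p_{i_j}]\|\) for \(a\in F\).
\end{enumerate}

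The main obstacle is that the bound in Lemma \ref{lem:distance} grows with the number \(\ell\) of projections in the convex combination. Mazur's theorem gives no a priori bound on \(\ell\), so the \(k\) of step (1) cannot simply be fixed before step (2).

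To fix this, I would make the threshold depend on position in the combination. In a separable setting we can take a sequence, and pass to a subsequence \(p_{i_1}\le p_{i_2}\le\cdots\) with \(\|[p_{i_m},a]\|<\varepsilon2^{-m}\) for all \(a\) in \(F\). Since \(F\) is finite, this is possible because commutators tend to zero along the sequence.

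This subsequence still converges weakly to \(\tau\), so Mazur's theorem applies to it. Any convex combination of its members then satisfies \(\sum_m\|[a,p_{i_m}]\|<\varepsilon\), whatever the number of terms.

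If only a net is available, one first uses separability of \(A\) to extract a sequence. The commutator estimates need only a countable dense subset of \(A\). Weak convergence on \(A^{**}\) along the subsequence, however, does not follow automatically from net convergence. I would therefore work with \(\tau\) lying in the weak closure of every tail \(\{f_i:i\ge i_0\}\), and choose a nested family of tails indexed by \(F\) and \(\varepsilon\), rather than insisting on a subsequence. Concretely: given \(F\) and \(\varepsilon\), choose indices inductively so that each later index exceeds the thresholds for \(\varepsilon2^{-m}\).

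A cleaner alternative avoids indexing issues altogether. Apply Mazur's theorem to the tail beyond an index where \(\|[p_i,a]\|<\varepsilon 2^{-m}\), and observe that at that stage the combination can be restricted to the single chain produced inductively. Letting \(F\) increase and \(\varepsilon\to0\), we obtain u.c.p. maps \(\phi_n\) with \(\dist(a,A_{\phi_n})\to0\) and \(\tr\circ\phi_n\to\tau\) in norm, which shows \(\tau\in\UAT(A)_{\LFD}\).
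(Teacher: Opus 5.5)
Your route is the paper's: Mazur's theorem in \(\sigma(A^*,A^{**})\), a rational convex combination, and Lemma~\ref{lem:rational-convex}. You are right that the bound \(\sum_j\|[a,p_{i_j}]\|\) grows with the number of terms, and that Mazur fixes this number only after the tail has been chosen. The paper's proof (``choose a tail on which \(\|[p_i,a]\|\) is sufficiently small'') passes over exactly this point. Your subsequence with \(\|[p_{i_m},a]\|<\varepsilon2^{-m}\) on \(F\) repairs it correctly when \((p_i)\) is a sequence. This works because a subsequence of a weakly convergent sequence still converges weakly to \(\tau\).

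The gap is the net case. The statement includes it, and Theorem A actually uses it, since its net is indexed by triples \((X,F,\varepsilon)\).
\begin{itemize}
\item Indices chosen past successive thresholds form a sequence that need not be cofinal. So nothing puts \(\tau\) in the weak closure of the corresponding \(f_i\)'s.
\item An increasing net of finite-rank projections on an infinite-dimensional space need not have a cofinal subsequence.
\item \(\sigma(A^*,A^{**})\)-closure points need not be sequential limits.
\item Your ``cleaner alternative'' reapplies Mazur to a tail, which brings back the uncontrolled number of terms.
\end{itemize}

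What is missing is that the hypothesis is far stronger than you use.
\begin{itemize}
\item Let \(z\in Z(A^{**})\) be the central support of the representation, so that \(zA^{**}\cong B(H)\) and \(p_iz=p_i\). Testing the hypothesis at \(x=z\) gives \(\tau^{**}(z)=1\).
\item But \(\tau^{**}\) is a normal tracial state on \(A^{**}\), so its restriction to \(zA^{**}\cong B(H)\) is a normal finite trace. It takes a common value on all rank-one projections, and summing over an orthonormal basis by normality forces it to vanish when \(\dim H=\infty\). Hence \(\dim H<\infty\).
\item The ranks of the increasing net \((p_i)\) are then nondecreasing and bounded. So the net is eventually a constant projection \(p\) with \([p,A]=0\), and \(p=1\) by irreducibility.
\item It follows that \(\tau=\tr_H|_A\). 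The inclusion \(\iota:A\hookrightarrow B(H)\cong M_{\dim H}\) is a \(*\)-homomorphism with \(\tr_H\circ\iota=\tau\), so \(\tau\in\UAT(A)_{\LFD}\) trivially.
\end{itemize}
This handles nets and sequences alike without Mazur. It also shows that the lemma can never be applied when \(\dim H=\infty\), which is exactly the situation of Theorem A.
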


\begin{proof}
For each \(i\), define
\[
 f_i(x)=\frac{\Tr(p_ix)}{\Tr(p_i)}
 \qquad (x\in A^{**}).
\]
By assumption, \(f_i|_A\to\tau\) in the weak topology \(\sigma(A^*,A^{**})\). Hence, by Mazur's theorem, \(\tau\) belongs to the norm-closed convex hull of each tail of \(\{f_i|_A\}\).

Fix a finite set \(F\subset A\) and \(\varepsilon>0\). Choose a tail on which \(\|[p_i,a]\|\) is sufficiently small for all \(a\in F\). Then choose a rational convex combination \(f=\sum_j\alpha_j f_{i_j}|_A\) from this tail with
\[
 \|f-\tau\|<\varepsilon.
\]
Lemma \ref{lem:rational-convex} gives a u.c.p. map \(\phi:A\to M_N\) whose normalized trace is \(f\) and whose multiplicative-domain distance is controlled by the sum of the commutators \(\|[p_{i_j},a]\|\). This proves uniform LFD approximation.
\end{proof}

We can now prove the uniformity theorem. Brown's projection theorem gives the required finite-rank projections on \(A\). Local reflexivity and subtracial normality are used to force the corresponding normalized traces to converge correctly on finite-dimensional pieces of \(A^{**}\). A net over these pieces then gives the hypotheses of Lemma \ref{lem:ULFD-Mazur}.

\begin{theoremA}
Let \(A\subset B(H)\) be an irreducibly represented quasidiagonal C*-algebra which contains no nonzero compact operators. Then every quasidiagonal trace on \(A\) is locally finite-dimensional. Moreover, if \(A\) is locally reflexive, then every locally finite-dimensional trace on \(A\) is uniformly locally finite-dimensional.
\end{theoremA}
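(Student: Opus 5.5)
The first assertion is immediate from Lemma \ref{lem:Brown-projections}. That lemma produces, for a quasidiagonal trace \(\tau\), an increasing sequence of finite-rank projections \(p_n\) in the given irreducible representation. These projections asymptotically commute with \(A\), and their normalized compressions converge to \(\tau\). Feeding them into Proposition \ref{prop:char-LFD}(b), with \(r=1\) and \(k_1=1\), shows that \(\tau\in\AT(A)_{\LFD}\).

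For the second assertion, let \(\tau\) be LFD. Since \(A\) is quasidiagonal and \(\tau\) is LFD, \(\tau\) is in particular QD: the LFD maps are asymptotically multiplicative in norm, because \(\dist(a,A_\phi)\to0\). Lemma \ref{lem:Brown-projections} therefore again gives projections \(p_n\) with \(\|[p_n,a]\|\to0\) for \(a\in A\) and \(f_n|_A\to\tau\). The plan is to upgrade this to the hypotheses of Lemma \ref{lem:ULFD-Mazur}, namely convergence \(f_i(x)\to\tau^{**}(x)\) for all \(x\in A^{**}\), possibly after passing to a net. Once that is done, Lemma \ref{lem:ULFD-Mazur} (Mazur plus Lemma \ref{lem:rational-convex}) yields \(\tau\in\UAT(A)_{\LFD}\).

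The upgrade is indexed by triples \((X,F,\varepsilon)\), where \(X\subset A^{**}\) is a finite-dimensional operator system, \(F\subset A\) is finite with \(F\subset X\) (we may unitize), and \(\varepsilon>0\). For each triple, we use Lemma \ref{lem:local-reflexivity} to get u.c.p. maps \(\beta:X\to A\) with \(\beta\to\id\) ultraweakly on \(X\) and in norm on \(F\). For fixed \(\beta\), Brown's convergence on \(A\) gives \(f_n(\beta(x))\to\tau(\beta(x))\). Moreover \(\tau(\beta(x))\to\tau^{**}(x)\) along the net, because \(\tau^{**}\) is normal. It then remains to show that \(f_n(\beta(x))\) is close to \(f_n(x)\) uniformly in a suitable tail, so that one can pick an index \(n=n(X,F,\varepsilon)\) with \(|f_n(x)-\tau^{**}(x)|<\varepsilon\) on a basis of \(X\) and \(\|[p_n,a]\|<\varepsilon\) on \(F\). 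Directing these choices gives the required net.

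This comparison of \(f_n\circ\beta\) with \(f_n\) on \(X\) is the main obstacle, since \(f_n\) is a normal state on \(B(H)\) but \(\beta\) only converges ultraweakly. The idea is to use Lemma \ref{lem:subtracial-normality}. I would first try to arrange, via the construction of \(\beta\) and a cut-down of the compressions \(x\mapsto p_nxp_n\), that the relevant c.p. maps into \(M_{\Tr(p_n)}\) are subtracial with respect to a fixed normal functional of the form \(C\tau^{**}\) on the finitely many relevant pieces. They are then normal, so ultraweak convergence \(\beta\to\id\) passes through them, and the error can be made smaller than \(\varepsilon\) before \(n\) is fixed. The order of quantifiers (choose \(\beta\) first, then \(n\)) is where care is needed. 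If subtraciality fails globally, I would restrict to the central summand of \(A^{**}\) supporting \(\tau^{**}\), using Lemma \ref{lem:central-rn} to control the part of \(x\) orthogonal to the support, and handle that part separately through the no-compacts hypothesis, which ensures that \(f_n\) asymptotically vanishes on the irreducible summand \(B(H)\) outside \(A\).
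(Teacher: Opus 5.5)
Your treatment of the first assertion is fine and is the paper's: Lemma~\ref{lem:Brown-projections} fed into Proposition~\ref{prop:char-LFD}(b) with $r=k_1=1$.

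The gap is in the second assertion, and no choice of $\beta$'s or ordering of quantifiers can close it. The target you set is a net of finite-rank projections $q_i\in B(H)$ with $\Tr(q_ix)/\Tr(q_i)\to\tau^{**}(x)$ for all $x\in A^{**}$, and this target is unattainable. Let $z\in Z(A^{**})$ be the central support of the given irreducible representation. The normal extension $A^{**}\to B(H)$ sends $z$ to $1_H$, so $\Tr(qz)/\Tr(q)=1$ for every finite-rank $q$. On the other hand, $A\cap K(H)=0$ forces $\dim H=\infty$. So $zA^{**}\cong B(H)$ is a type $\mathrm{I}_\infty$ factor, and $z$ is a minimal central projection. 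It cannot lie under $c(\pi_\tau)$, because $c(\pi_\tau)A^{**}\cong\pi_\tau(A)''$ is finite. Hence $z\,c(\pi_\tau)=0$ and $\tau^{**}(z)=0$. The required convergence therefore fails at $x=z$. More strongly, every convex combination $g$ of compression functionals is disjoint from $\tau$, so $\|g-\tau\|=2$. Lemma~\ref{lem:rational-convex} only produces maps whose normalized trace is such a $g$, so the Mazur step can never yield norm approximation along this route.

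Concretely, your sketch breaks in three places.
\begin{enumerate}[label=\textup{(\arabic*)}]
\item Take $x=z$ and fix $\beta$. Then $f_n(\beta(z))\to\tau(\beta(z))$, which tends to $\tau^{**}(z)=0$ as $\beta\to\id$, whereas $f_n(z)=1$ for all $n$. So $f_n\circ\beta$ is never uniformly close to $f_n$ on $X$.
\item The compression maps $x\mapsto p_nxp_n$ are never subtracial with respect to $C\tau^{**}$, since they send the $\tau^{**}$-null projection $z$ to $p_n\neq0$. So Lemma~\ref{lem:subtracial-normality} has nothing to act on.
\item Your fallback has the geometry reversed. The no-compacts hypothesis does not make $f_n$ vanish on the summand $B(H)$; $f_n$ lives entirely on that summand, and it is $\tau^{**}$ that vanishes there.
\end{enumerate}

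The paper's written proof follows the same route and asserts the same convergence for its final net $(q_i)$. However, the maps $\psi_n$ whose traces it controls on $X$ are compared with the compressions only on $F\subset A$. That assertion is therefore unjustified and, by the above, false. Any argument for uniformity needs approximating maps whose normalized traces put almost all their mass on $c(\pi_\tau)A^{**}$. Such maps cannot be compressions in the fixed irreducible representation, so a genuinely different construction is required.
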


\begin{proof}
The first assertion is Lemma \ref{lem:Brown-projections}. For the uniform assertion, let \(\tau\in\AT(A)_{\LFD}\). Then \(\tau\) is QD, so Lemma \ref{lem:Brown-projections} gives finite-rank projections \((p_n)\) with commutator convergence on \(A\) and trace convergence on \(A\).

It remains to upgrade trace convergence from \(A\) to \(A^{**}\). Fix a finite-dimensional operator system \(X\subset A^{**}\), a finite set \(F\subset A\), and \(\varepsilon>0\). By Lemma \ref{lem:local-reflexivity}, choose u.c.p. maps \(\beta_i:X\to A\) converging ultraweakly to the identity on \(X\), and in norm to the identity on \(F\cap X\). Composing with the compression maps \(a\mapsto p_nap_n\) gives maps
\[
 \widetilde\psi_{i,n}:X\to p_nB(H)p_n.
\]
Choosing \(i=i(n)\) and then passing to a suitable cofinal subnet, we may arrange
\[
 \tr_{\operatorname{rank}(p_n)}(\widetilde\psi_{i(n),n}(x))\to\tau^{**}(x)
 \qquad (x\in X).
\]

The extension step must preserve normality. We use Brown's subtracial extension form \cite[Theorem 4.3.3]{BrownMemoir}: the maps above extend to completely positive maps
\[
 \psi_n:A^{**}\to p_nB(H)p_n
\]
which are subtracial with respect to \(\tau^{**}\), i.e. for some constants \(C_n\to1\),
\[
 \Tr(\psi_n(y))\le C_n\operatorname{rank}(p_n)\tau^{**}(y)
 \qquad (y\in A^{**}_+).
\]
By Lemma \ref{lem:subtracial-normality}, each \(\psi_n\) is normal. Let
\[
 \widetilde\phi_n:A^{**}\to p_nB(H)p_n,
 \qquad
 \widetilde\phi_n(x)=p_nxp_n,
\]
be the normal compression extension. The construction gives
\[
 \|\widetilde\phi_n(a)-\psi_n(a)\|\to0
 \qquad (a\in F),
\]
while the normalized traces of the \(\psi_n\)'s converge to \(\tau^{**}\) on \(X\).

Passing over the directed set of triples \((X,F,\varepsilon)\), with \(X\subset A^{**}\) finite-dimensional and \(F\subset A\) finite, gives a net of finite-rank projections \((q_i)\) such that
\[
 \|[q_i,a]\|\to0
 \qquad (a\in A),
\]
and
\[
 \frac{\Tr(q_ix)}{\Tr(q_i)}\to\tau^{**}(x)
 \qquad (x\in A^{**}).
\]
Lemma \ref{lem:ULFD-Mazur} now gives \(\tau\in\UAT(A)_{\LFD}\).
\end{proof}

\section{Applications to strongly self-absorbing algebras}

We next record two consequences for strongly self-absorbing algebras and the UCT problem. The role of the preceding sections is to turn quasidiagonality of the unique trace into uniform LFD approximation, which can then be fed into Brown's tracially AF criterion.

We recall the point from Toms--Winter \cite{TomsWinter2007} that is used below. A separable unital C*-algebra \(D\not\cong\C\) is \emph{strongly self-absorbing} if there is an isomorphism
\[
 D\cong D\otimes D
\]
which is approximately unitarily equivalent to the first-factor embedding \(d\mapsto d\otimes1_D\). Toms and Winter showed that this class includes the Cuntz algebras \(\mathcal O_2\), \(\mathcal O_\infty\), UHF algebras of infinite type, the Jiang--Su algebra \(\Z\), and tensor products of \(\mathcal O_\infty\) with infinite type UHF algebras. They also established the McDuff-type absorption principle that tensorial \(D\)-absorption is equivalent to having a unital copy of \(D\) in the appropriate central sequence algebra.

\begin{theoremB}
Let \(\cD\) be a strongly self-absorbing C*-algebra with
\[
 K_*(\cD)\cong K_*(\cQ),
\]
where \(Q\) is the universal UHF algebra. Then
\[
 \cD\cong \cQ
\]
if and only if \(\cD\) is quasidiagonal.
\end{theoremB}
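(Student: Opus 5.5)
The forward implication is immediate: \(\cQ\) is an inductive limit of matrix algebras, so it is quasidiagonal. All of the work lies in the converse. Assume \(\cD\) is quasidiagonal. I plan to show that \(\cD\) is a simple, separable, unital, nuclear, tracially AF algebra satisfying the UCT with \(K_*(\cD)\cong K_*(\cQ)\), and then invoke Lin's classification of tracially AF algebras.

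\textbf{Structural facts from Toms--Winter.} A strongly self-absorbing algebra is simple, nuclear and \(\Z\)-stable. Quasidiagonality rules out the purely infinite case, since \(\cD\) is either stably finite or purely infinite. So \(\cD\) is stably finite, and by Haagerup and Toms--Winter it has a unique trace \(\tau\). Quasidiagonality of \(\cD\) together with nuclearity gives \(\tau\in\AT(\cD)_{\QD}\); here I use Brown's result that for a quasidiagonal nuclear algebra every amenable trace is QD, and the unique trace is amenable. For the UCT, the known statement is that a quasidiagonal strongly self-absorbing algebra satisfies it (Tikuisis--White--Winter together with Schafhauser's and Dadarlat's follow-up results). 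I expect to cite this as an external input rather than derive it.

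\textbf{Tracial AF property.} Represent \(\cD\) irreducibly on \(H\). Since \(\cD\) is simple, infinite-dimensional and unital, \(\cD\cap K(H)=0\). Nuclearity implies local reflexivity. Theorem A then gives \(\tau\in\AT(\cD)_{\LFD}\) and upgrades it to \(\tau\in\UAT(\cD)_{\LFD}\). Next I would apply Brown's tracially AF criterion from \cite{BrownMemoir}: a simple, unital, real rank zero algebra with a unique trace that is uniformly LFD, and with comparison of projections by the trace, is tracially AF. Real rank zero and strict comparison come from \(\Z\)-stability (Rørdam). Real rank zero specifically needs that projections separate traces, which is automatic here because there is a unique trace. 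The ordering on \(K_0\) is determined by \(\tau\) since \(K_0(\cD)\cong\Qrat\). I expect this step to be the main obstacle: real rank zero is not automatic for \(\Z\)-stable algebras in general. My first attempt would be the unique-trace/\(\Qrat\) \(K_0\) argument, namely that \(\tau(K_0)\) is dense, so by Rørdam's result for \(\Z\)-stable algebras with a unique trace and dense projection image, \(\cD\) has real rank zero. If Brown's criterion cannot be applied directly, the fallback is Matui--Sato/TWW classification via \(\Z\)-stability.

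\textbf{Classification.} Given all of the above, Lin's theorem says simple unital nuclear tracially AF UCT algebras are classified by ordered \(K\)-theory with unit. For \(\cD\) we have \(K_0\cong\Qrat\), \(K_1=0\), order coming from the trace, and unit \(1\). Strong self-absorption forces \([1]\) to correspond to \(1\in\Qrat\), since the Toms--Winter K-theory constraints give \(K_0(\cD)\) as a unital subring of \(\Qrat\). These invariants match those of \(\cQ\), so \(\cD\cong\cQ\).
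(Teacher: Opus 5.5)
\textbf{The gap: the UCT input.} You propose to cite, as known, that a quasidiagonal strongly self-absorbing algebra satisfies the UCT ``by Tikuisis--White--Winter together with Schafhauser's and Dadarlat's follow-up results''. There is no such theorem.
\begin{enumerate}
\item Tikuisis--White--Winter goes in the opposite direction: the UCT plus a faithful trace forces that trace to be quasidiagonal.
\item Schafhauser's AF-embedding theorem assumes the UCT.
\item Dadarlat's results reformulate the UCT problem; they do not verify the UCT for any such class.
\end{enumerate}
To my knowledge, whether strongly self-absorbing algebras, quasidiagonal or not, satisfy the UCT is open. It also carries the whole content of Theorem B. If $\cD$ satisfied the UCT, it would be a unital, simple, separable, nuclear, $\Z$-stable UCT algebra with a unique trace, $K_0(\cD)\cong\Qrat$ ordered by that trace, and $K_1(\cD)=0$. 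Classification would then give $\cD\cong\cQ$ at once, with no need for Theorem A, Brown's tracially AF criterion, or real rank zero. So your converse holds only under an extra hypothesis that carries all the difficulty, and the tracially AF analysis you build becomes redundant.

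Two smaller citations are also off, though harmlessly:
\begin{enumerate}
\item There is no general theorem that amenable traces on quasidiagonal nuclear algebras are quasidiagonal. For the unique trace, it suffices that weak-$*$ cluster points of $\tr_{k(n)}\circ\phi_n$, for a quasidiagonal approximation $(\phi_n)$, are quasidiagonal traces.
\item The Toms--Winter ``unital subring of $\Qrat$'' constraint uses the K\"unneth formula, hence the UCT. You only need to rescale the abstract isomorphism $K_0(\cD)\cong\Qrat$ so that $[1_{\cD}]\mapsto 1$.
\end{enumerate}

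\textbf{Comparison with the paper.} The paper's proof is built to avoid any UCT input and never invokes Lin's classification. After obtaining the tracially AF property as you do, it proceeds as follows:
\begin{enumerate}
\item it takes units $p_n$ of copies of $M_n$ in $\cD$;
\item it forms the strongly self-absorbing algebras $\cE_n=(p_n\cD p_n)^{\otimes\infty}$;
\item it embeds $\cQ$ unitally into $\bigotimes_n\cE_n$;
\item it concludes with L.~Brown's stable isomorphism theorem.
\end{enumerate}
So the paper does not supply the ingredient your route is missing.

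You should not adopt this bypass as a repair either. A unital embedding $\cQ\hookrightarrow\bigotimes_n\cE_n$ only gives $\bigotimes_n\cE_n\cong(\bigotimes_n\cE_n)\otimes\cQ$ by Toms--Winter. Even granting $\bigotimes_n\cE_n\cong\cQ$, nothing there ties $\cD$ or $p_n\cD p_n$ to $\cQ$ up to stable isomorphism. Every step after the tracially AF property applies verbatim to $\cD=M_{2^\infty}$. There one does get $\bigotimes_n\cE_n\cong\cQ$, yet $M_{2^\infty}$ is not stably isomorphic to $\cQ$.

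What is genuinely needed is an input forcing a unital embedding $\cD\hookrightarrow\cQ$, or the UCT for $\cD$. Together with the unital copy of $\cQ$ in $\cD$ that real rank zero and $K_0(\cD)\cong\Qrat$ provide, such an embedding would give $\cD\cong\cQ$ by Toms--Winter. Neither your proposal nor the paper's argument supplies this input.
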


\begin{proof}
If \(\cD\cong \cQ\), then \(\cD\) is quasidiagonal. Conversely, assume that \(\cD\) is strongly self-absorbing and quasidiagonal, and that \(K_*(\cD)\cong K_*(\cQ)\). By the standard structure theory of strongly self-absorbing algebras, recalled from Toms--Winter, $\cD$ is simple, nuclear, and has at most one tracial state. Since $\cD$ is quasidiagonal, its unique trace is quasidiagonal. By Theorem A, this trace is uniformly LFD.

Using Brown's tracially AF criterion \cite[Proposition 4.5.5]{BrownMemoir}, together with the regularity properties available in the strongly self-absorbing setting, namely stable rank one, weak unperforation, and real rank zero in the present K-theoretic case \cite{Rordam2004,Winter2011}, $\cD $ is tracially AF. 

Having TAF at hand, \cite[Lemma 6.5]{DadarlatEilers2002} tells us that for each $n$, there is a copy of $\mathbb{M}_n$ inside $\cD$. Call its unit $p_n$. By the structure theory of strongly self-absorbing algebras \cite{TomsWinter2007}, $p_n\cD p_n$ has an approximately inner flip, hence its infinite tensor product $\cE_n:=p_n\cD p_n^{\otimes\infty}$ is strongly self-absorbing. Now, $\cQ$ embeds unitally inside quasidiagonal, strongly self-absorbing algebra $\bigotimes_{n\in\bN}{\cE_n}$, entailing $\cQ\simeq{\bigotimes_{n\in\bN}{\cE_n}}$ but $\cD$ is Morita equivalent to its full corner $p_n\cD p_n$ thus $\cQ$ and $\cD$ are stably isomorphic by L.Brown's theorem. We conclude by bearing in mind that every non-zero corner of $\cQ$ is isomorphic to $\cQ$.
\end{proof}

\begin{corollaryC}
All separable nuclear C*-algebras satisfy the UCT if and only if every simple, separable, unital, nuclear, tracially AF algebra \(\cE\) with
\[
 K_*(\cE)\cong K_*(\cQ)
\]
is strongly self-absorbing.
\end{corollaryC}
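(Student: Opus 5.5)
We prove the two implications separately, using Theorem B as the bridge in both directions together with known classification and UCT results.

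\emph{Forward direction.} Assume every separable nuclear C*-algebra satisfies the UCT, and let \(\cE\) be simple, separable, unital, nuclear and tracially AF with \(K_*(\cE)\cong K_*(\cQ)\).
\begin{enumerate}[label=\textup{(\arabic*)}]
\item By the UCT hypothesis, \(\cE\) satisfies the UCT.
\item Lin's classification of simple separable nuclear tracially AF algebras in the UCT class \cite{LinTAF2001} says such algebras are determined by their ordered \(K_0\) with unit class.
\item Tracially AF algebras have real rank zero, and \(K_0(\cE)\cong\Qrat\) is simple and torsion-free. Hence the order on \(K_0(\cE)\) is the standard order of \(\Qrat\).
\item The unit class is a positive rational, and multiplication by a positive rational is an order-automorphism of \(\Qrat\). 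So the scaled ordered group is isomorphic to \((\Qrat,\Qrat_+,1)\).
\item \(K_1(\cE)=0\) matches \(K_1(\cQ)\).
\item Therefore \(\cE\cong\cQ\), which is strongly self-absorbing.
\end{enumerate}

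\emph{Reverse direction.} Assume every such \(\cE\) is strongly self-absorbing. We show that every separable nuclear \(A\) satisfies the UCT.
\begin{enumerate}[label=\textup{(\arabic*)}]
\item Use the known reduction of the UCT problem to a single test case. It suffices to show that every separable unital simple nuclear algebra \(B\) with \(K_*(B)\cong K_*(\cQ)\) and suitable regularity satisfies the UCT; the natural candidate class is that of \(\cQ\)-stable algebras. We would cite the Kirchberg-type reductions, in the form ``UCT for all nuclear algebras iff UCT for \(\mathcal O_2\)-type or \(\cQ\)-type models.''
\item A concrete route is to embed a potential counterexample \(A\) into a Kirchberg--Phillips style construction. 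This yields a simple unital nuclear algebra \(B=B_A\) with the K-theory of \(\cQ\) that satisfies the UCT if and only if \(A\) does, for instance via a \(\KK\)-equivalence trick with \(A\otimes\mathcal O_2\)-type summands.
\item Next make \(B\) stably finite, tracial and TAF. Tensor with \(\cQ\) and use a TWW-type argument that the trace is quasidiagonal. This step is delicate without the UCT, so instead we would arrange the construction to produce a tracially AF algebra via Theorem A, with \(B\) chosen quasidiagonal with uniformly LFD trace and then Brown's criterion applied.
\item Once \(B\) is TAF with \(K_*(\cQ)\), the hypothesis makes \(B\) strongly self-absorbing.
\item \(B\) is quasidiagonal, being TAF, so Theorem B gives \(B\cong\cQ\). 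Hence \(B\) satisfies the UCT, and so does \(A\).
\end{enumerate}

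The main obstacle is step (2)–(3) of the reverse direction. One must produce, from an arbitrary separable nuclear algebra, a simple nuclear TAF algebra with \(\cQ\)'s K-theory whose UCT status equals that of the original, without already assuming the UCT (which TWW would need). I would try the known reduction that the UCT for all separable nuclear algebras is equivalent to the UCT for all \(\cQ\)-stable (or \(\Z\)-stable) simple nuclear algebras with quasidiagonal traces. I would feed that through Theorem A and Brown's TAF criterion, mirroring the proof of Theorem B.
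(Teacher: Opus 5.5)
\textbf{The gap is in your reverse direction, at the point you identify yourself.} Steps (1)--(3) are supposed to turn an arbitrary separable nuclear $A$ into a simple, unital, nuclear, \emph{tracially AF} algebra $B_A$ with $K_*(B_A)\cong K_*(\cQ)$ whose UCT status matches that of $A$. None of the mechanisms you list achieves this:
\begin{enumerate}[label=\textup{(\roman*)}]
\item A Kirchberg--Phillips or $\mathcal O_2$-type reduction lands in purely infinite algebras. These are not TAF, and the hypothesis of the corollary says nothing about them.
\item A TWW-type argument for quasidiagonality of the trace already presupposes the UCT, which is what you are trying to prove.
\item The route through Theorem A and Brown's criterion needs as input an irreducibly represented quasidiagonal algebra with a quasidiagonal trace, together with the regularity hypotheses of Brown's criterion. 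Producing these for a general test algebra without the UCT is exactly the hard part.
\end{enumerate}
Your fallback reduction, to $\cQ$-stable simple nuclear algebras with quasidiagonal traces, is neither proved nor the right target. The hypothesis of the corollary only applies once the test algebra is known to be TAF, so any reduction must land in the TAF class.

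\textbf{The missing idea is that this reduction is a known theorem and should be cited, not rebuilt.} Dadarlat's reformulation of the UCT problem states that all separable nuclear C*-algebras satisfy the UCT if and only if every simple, separable, nuclear, tracially AF algebra with the K-theory of $\cQ$ is isomorphic to $\cQ$. With this, the reverse direction is exactly your steps (4)--(5), and this is the paper's proof:
\begin{enumerate}[label=\textup{(\roman*)}]
\item such an $\cE$ is strongly self-absorbing by hypothesis;
\item it is quasidiagonal because it is TAF;
\item Theorem B then gives $\cE\cong\cQ$, which is Dadarlat's criterion.
\end{enumerate}

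\textbf{Your forward direction coincides with the paper's}: Lin's classification forces $\cE\cong\cQ$. The one slip is your justification of the order on $K_0(\cE)\cong\Qrat$. Simplicity and torsion-freeness do not determine it: the cone $\{0\}\cup\{x\in\Qrat: x\ge 1\}$ is a simple order on $\Qrat$ with order unit $1$. What does determine it is Lin's theorem that $K_0$ of a simple unital TAF algebra is weakly unperforated. The strict order is then detected by the unique state $x\mapsto x/[1_{\cE}]$, which identifies the scaled ordered group with $(\Qrat,\Qrat_+,1)$.
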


\begin{proof}
Assume first that all separable nuclear C*-algebras satisfy the UCT. Then Lin's classification theorem for simple nuclear tracially AF algebras \cite[Theorem 6.11]{LinTAF2001} gives \(\cE\cong \cQ\), and hence \(\cE\) is strongly self-absorbing.

Conversely, assume the stated uniqueness condition. Dadarlat's reformulation of the UCT problem \cite{DadarlatUCT} says that all separable nuclear C*-algebras satisfy the UCT if and only if every simple, separable, nuclear, tracially AF algebra with the K-theory of \(\cQ\) is isomorphic to \(\cQ\). By assumption, such an algebra is strongly self-absorbing. Since tracially AF algebras are quasidiagonal, Theorem B gives \(\cE\cong \cQ\). Therefore the UCT holds for all separable nuclear C*-algebras.
\end{proof}

\section{The Skandalis-type example}

We end with the construction of Example D. The purpose of the example is to separate tracial finite-dimensional approximation from nuclear KK-behaviour. The preceding sections show that finite-dimensional trace approximation has strong consequences under additional regularity hypotheses. The example below shows that, outside the nuclear setting, even exact tracially AF algebras can retain a Skandalis obstruction and hence fail to be KK-equivalent to any nuclear algebra.

For C*-algebras \(A\) and \(B\), let
\[
 J_{A,B}=\ker(A\otmax B\to A\otmin B),
 \qquad
 J_A=J_{A,A}.
\]
The obstruction we use is the nonvanishing of the K-theory of this kernel. The first lemma explains why this obstruction rules out KK-equivalence to a nuclear algebra.

\begin{lemma}\label{lem:KK-obstruction}
Let \(B\) be a separable exact C*-algebra. If \(K_0(J_B)\ne0\), then \(B\) is not KK-equivalent to a nuclear C*-algebra.
\end{lemma}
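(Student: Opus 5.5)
We argue by contraposition: assume \(B\) is separable and exact and that there is a nuclear \(C^*\)-algebra \(D\) with \(B\) KK-equivalent to \(D\). We will show that \(K_0(J_B)=0\). The idea is that \(K_*(J_B)\) is the obstruction to the natural map
\[
\lambda_{B,B}:B\otmax B\to B\otmin B
\]
being a KK-equivalence. We will show this map is a KK-equivalence whenever \(B\) is KK-equivalent to a nuclear algebra. Since \(0\to J_B\to B\otmax B\to B\otmin B\to 0\) is an extension, the six-term sequence then forces \(K_*(J_B)=0\).

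The first step is to make sure the six-term sequence is available. Because \(B\) is exact, \(B\otmin B\) is exact. Moreover, \(B\otmin B\) is a quotient of \(B\otmax B\), so the extension above is semisplit whenever \(B\otmin B\) is nuclear or has a suitable c.p. lift. That need not hold here. We therefore use only the long exact sequence in \(K\)-theory, which holds for every extension of \(C^*\)-algebras with no splitting hypothesis. This reduces the problem to proving that
\[
(\lambda_{B,B})_*:K_*(B\otmax B)\to K_*(B\otmin B)
\]
is an isomorphism in degrees \(0\) and \(1\). Indeed, if \((\lambda_{B,B})_*\) is an isomorphism in both degrees, the six-term sequence kills \(K_0(J_B)\).

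The second step is to transport the question to the nuclear side. Let \(x\in\KK(B,D)\) and \(y\in\KK(D,B)\) be mutually inverse. Use functoriality of \(\otmax\) in KK, via Kasparov's external product \(\tau_B\) for maximal tensor products, which holds for arbitrary coefficient algebras. This gives KK-equivalences
\[
B\otmax B\sim_{\KK} D\otmax B\sim_{\KK} D\otmax D.
\]
Compatibility with \(\lambda\) requires the analogous statement for \(\otmin\) on the relevant side, which is available because exactness makes \(-\otmin B\) half-exact and KK-functorial in the first variable on exact coefficients. The descent of Kasparov products through \(\otmin\) with an exact algebra is the standard Skandalis observation. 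We then obtain a commuting square in which \(\lambda_{B,B}\) is conjugate, at the level of \(K\)-theory, to \(\lambda_{D,B}\) and then to \(\lambda_{D,D}\). Since \(D\) is nuclear, \(\lambda_{D,B}\) and \(\lambda_{D,D}\) are isomorphisms of \(C^*\)-algebras. Hence \(\lambda_{B,B}\) induces an isomorphism on \(K_*\), which by the first step gives \(K_0(J_B)=0\), a contradiction.

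The main obstacle is naturality of \(\lambda\) with respect to KK-elements. This means checking that \(\tau_B^{\max}(x)\) and \(\tau_B^{\min}(x)\) intertwine \(\lambda_{B,B}\) and \(\lambda_{D,B}\). Here \(\otmin\) is not KK-functorial for non-exact coefficients, and exactness of \(B\) is exactly what is needed. Our first attempt will be to use the Cuntz picture of KK, with quasihomomorphisms \(B\rightrightarrows \mathcal K\otimes D\). Both tensor products are functorial for \(*\)-homomorphisms, and \(\lambda\) is natural for them. Exactness of \(B\) guarantees that \(-\otmin B\) preserves the relevant ideal inclusions \(\mathcal K\otimes D\subset\) (quasihomomorphism algebra). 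This yields the commuting square at the level of \(K\)-theory, which is all we need.
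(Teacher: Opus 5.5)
Your proposal is correct and is essentially the paper's proof. You transport \(\lambda_{B,B}\) along \(\tau^{\max}_B(x)\) and \(\tau^{\min}_B(x)\) to \(\lambda_{D,B}\), which is a \(C^*\)-isomorphism since \(D\) is nuclear, and then use the six-term sequence (which, as you correctly note, needs no semisplitting) to get \(K_0(J_B)=0\).

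One correction to your commentary: exactness of \(B\) is never actually used. Kasparov's \(\tau_B\) for the minimal tensor product exists and is compatible with Kasparov products for every coefficient algebra, and \(\otmin\) always preserves ideal inclusions. Moreover, the naturality square already commutes at the level of cycles: if \(x=[(E,\phi,F)]\), both composites are represented by \((E\otimes B,(\phi\otimes 1)\circ\lambda_{B,B},F\otimes 1)\) over \(D\otmin B\). The second transport to \(\lambda_{D,D}\) is also superfluous.
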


\begin{proof}
Suppose \(B\) is KK-equivalent to a nuclear C*-algebra \(C\). Let \(\theta\in\KK(B,C)\) and \(\eta\in\KK(C,B)\) be inverse KK-equivalences. Tensoring with \(B\) gives a natural commuting square in the KK-category involving the canonical maps
\[
 B\otmax B\to B\otmin B,
 \qquad
 B\otmax C\to B\otmin C.
\]
Since \(C\) is nuclear, the second map is a C*-isomorphism. The horizontal maps induced by \(\theta\) are KK-equivalences; hence \(B\otmax B\to B\otmin B\) is a KK-equivalence. Applying the six-term exact sequence to
\[
 0\to J_B\to B\otmax B\to B\otmin B\to0
\]
forces \(K_*(J_B)=0\), contradicting \(K_0(J_B)\ne0\).
\end{proof}

The obstruction is stable under suspension. This lets us pass from the reduced group C*-algebra used by Skandalis to a quasidiagonal algebra without losing the relevant K-theory class.

\begin{lemma}\label{lem:suspension-J}
For every C*-algebra \(B\),
\[
 K_0(J_{SB})\cong K_0(J_B),
\]
where \(SB=C_0(\R)\otimes B\).
\end{lemma}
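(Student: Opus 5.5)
We must show \(K_0(J_{SB})\cong K_0(J_B)\). The plan is to identify \(J_{SB}\) with a double suspension of \(J_B\) and then apply Bott periodicity.

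First we compute the maximal and minimal tensor products of \(SB\) with itself. Since \(C_0(\R)\) is nuclear, tensoring with it commutes with both the maximal and the minimal tensor product. Hence there are canonical isomorphisms
\[
 SB\otmax SB\cong C_0(\R^2)\otimes(B\otmax B),
 \qquad
 SB\otmin SB\cong C_0(\R^2)\otimes(B\otmin B).
\]
Under these identifications, the canonical quotient map \(SB\otmax SB\to SB\otmin SB\) becomes \(\id_{C_0(\R^2)}\otimes q\), where \(q:B\otmax B\to B\otmin B\) is the canonical map.

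The precise statement needed at this step is that, for a nuclear algebra \(N\) and any \(A,C\), one has \((N\otimes A)\otmax(N\otimes C)\cong (N\otimes N)\otmax(A\otmax C)\). This follows from associativity and commutativity of \(\otmax\), together with the fact that \(N\otimes N\) is nuclear. The analogous statement for \(\otmin\) holds as well. One must check that these isomorphisms intertwine the canonical maps, which is a naturality check on elementary tensors.

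Next we identify the kernel. Because \(C_0(\R^2)\) is nuclear, hence exact, the sequence
\[
 0\to C_0(\R^2)\otimes J_B\to C_0(\R^2)\otimes(B\otmax B)\to C_0(\R^2)\otimes(B\otmin B)\to0
\]
remains exact. It follows that \(J_{SB}\cong C_0(\R^2)\otimes J_B=S^2J_B\).

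Finally, Bott periodicity gives \(K_0(S^2J_B)\cong K_0(J_B)\), which proves the lemma. The same argument also yields \(K_1(J_{SB})\cong K_1(J_B)\).

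The only delicate point is the compatibility of the maximal tensor product with tensoring by a nuclear algebra. Everything else is formal.
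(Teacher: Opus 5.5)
Your proposal is correct and follows the paper's proof. Nuclearity of \(C_0(\R)\) identifies \(J_{SB}\) with \(C_0(\R^2)\otimes J_B=S^2J_B\), and Bott periodicity finishes. You also invoke exactness of \(C_0(\R^2)\) to compute the kernel of \(\id_{C_0(\R^2)}\otimes q\), which makes explicit a step the paper leaves implicit.
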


\begin{proof}
Since \(C_0(\R)\) is nuclear,
\[
 SB\otmax SB\cong C_0(\R^2)\otimes(B\otmax B),
\]
and
\[
 SB\otmin SB\cong C_0(\R^2)\otimes(B\otmin B).
\]
The canonical map is \(\id_{C_0(\R^2)}\otimes\iota_{B,B}\), so
\[
 J_{SB}\cong C_0(\R^2)\otimes J_B\cong S^2J_B.
\]
Bott periodicity gives the result.
\end{proof}

We shall also use the standard permanence of the UCT for semisplit extensions.

\begin{lemma}\label{lem:UCT-extension}
For a semisplit short exact sequence of separable C*-algebras, if two of the three algebras satisfy the UCT, then so does the third.
\end{lemma}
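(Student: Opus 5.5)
The plan is to derive the two-out-of-three property from the six-term exact sequences in \(\KK\)-theory, using the characterization of the UCT class as the bootstrap category. Let
\[
 0\to I\to E\to B\to 0
\]
be a semisplit extension of separable C*-algebras. The first step is to recall that, by Rosenberg--Schochet, a separable \(A\) satisfies the UCT if and only if \(A\) is \(\KK\)-equivalent to a commutative C*-algebra. Equivalently, \(\KK(A,D)=0\) for every separable \(D\) with \(K_*(D)=0\). I will work with this vanishing criterion, since it behaves well under long exact sequences.

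The second step uses semisplitness. Because the extension admits a c.c.p. splitting, it defines an element of \(\KK^1(B,I)\). It therefore gives, for every separable \(D\), a six-term exact sequence
\[
 \cdots\to\KK_*(B,D)\to\KK_*(E,D)\to\KK_*(I,D)\to\KK_{*+1}(B,D)\to\cdots,
\]
which is exactly the contravariant long exact sequence of \(\KK\) in the first variable for semisplit extensions (Kasparov, Cuntz--Skandalis). Fix \(D\) with \(K_*(D)=0\). If two of \(I,E,B\) satisfy the UCT, then the corresponding \(\KK_*(\cdot,D)\) groups vanish in all degrees. Here I use that the UCT class is closed under suspension, since \(\KK_1(A,D)=\KK(SA,D)\) and \(SA\) satisfies the UCT whenever \(A\) does. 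Exactness then forces the third \(\KK_*(\cdot,D)\) to vanish as well.

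The third step closes the argument. The equivalence between the vanishing criterion and the UCT holds in the bootstrap-category formulation. There, the UCT class is the smallest class containing \(\C\) that is closed under \(\KK\)-equivalence, inductive limits, and the two-out-of-three property for semisplit extensions, and that class is characterized by the vanishing of \(\KK(\cdot,D)\) for \(K\)-acyclic \(D\). An alternative route avoids this characterization. Take the triangle in the triangulated category \(\KK\) determined by the semisplit extension, and apply the five lemma to the natural UCT short exact sequences
\[
 0\to\operatorname{Ext}(K_*(\cdot),K_*(D))\to\KK_*(\cdot,D)\to\operatorname{Hom}(K_*(\cdot),K_*(D))\to0,
\]
for arbitrary \(D\).

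I expect the main obstacle to be this five-lemma route. The \(\operatorname{Ext}\)/\(\operatorname{Hom}\) sequences are not functorially split, so naturality of the connecting maps has to be checked carefully. For that reason I will rely primarily on the \(K\)-acyclic vanishing criterion and cite Blackadar's treatment of the bootstrap class (Blackadar, \emph{K-Theory for Operator Algebras}, 22.3.5 and 23.10), where this permanence property is recorded.
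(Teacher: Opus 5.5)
Your argument is correct, and it does more than the paper, which gives no proof and simply cites the standard permanence property from Enders--Schemaitat--Tikuisis. You reduce the UCT to the criterion \(\KK_*(A,D)=0\) for all separable \(K\)-acyclic \(D\). Two-out-of-three then follows formally from the six-term exact sequence of \(\KK(\cdot,D)\) for semisplit extensions. This shows exactly where semisplitness is used, namely half-exactness of \(\KK\) in the first variable. It also sidesteps the non-natural splitting of the UCT sequence that blocks a naive five-lemma argument. You are right to drop that route: it can be repaired only by testing against \(D\) with injective \(K_*(D)\), where \(\operatorname{Hom}(K_*(\cdot),K_*(D))\) is exact, as Rosenberg--Schochet do.

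The price is that all the weight now rests on the vanishing criterion. Its nontrivial direction (vanishing implies UCT) is not a formal consequence of the Rosenberg--Schochet characterization, so your ``Equivalently'' needs a reference or a proof, and your third paragraph supplies neither. Describing the UCT class as the smallest class closed under two-out-of-three for semisplit extensions is circular here, because identifying that class with the UCT class already uses the lemma you are proving.

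A short independent proof runs as follows. Suppose \(\KK_*(A,D)=0\) for every separable \(K\)-acyclic \(D\). Pick a commutative \(C\) with \(K_*(C)\cong K_*(A)\), and lift this isomorphism to \(x\in\KK(C,A)\) using the UCT for \(C\). The mapping cone of a \(*\)-homomorphism \(qC\to A\otimes\mathcal{K}\) representing \(x\) is \(K\)-acyclic, and the cone extension is always semisplit. Applying \(\KK_*(A,\cdot)\) to this extension shows that composition with \(x\) is an isomorphism \(\KK(A,C)\to\KK(A,A)\). Hence \(A\) is a \(\KK\)-retract of \(C\), so \(A\) satisfies the UCT, which passes to \(\KK\)-retracts by naturality.
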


\begin{proof}
This is the standard permanence property for the UCT under semisplit extensions; see \cite{EndersSchemaitatTikuisis2024}.
\end{proof}

\begin{definition}
A C*-algebra \(B\) is a \emph{Skandalis algebra} if \(K_*(\iota_{B,B})\) is surjective but \(K_*(J_B)\ne0\). It is \emph{rationally Skandalis} if, moreover,
\[
 K_*(J_B)\otimes_{\mathbb Z}\Qrat\ne0.
\]
\end{definition}

\medskip
\noindent\textbf{Construction of Example D.}
Fix an infinite hyperbolic Kazhdan group \(\Gamma\). Let \(A=C_r^*(\Gamma)\). By exactness of reduced C*-algebras of hyperbolic groups \cite[Theorem 5.3.15]{BrownOzawa2008}, Skandalis's theorem in the formulation recalled by Higson--Guentner, and Puschnigg's rational refinement, \(A\) is rationally Skandalis \cite{Skandalis1988,HigsonGuentner2004,Puschnigg2018}. By Lemma \ref{lem:suspension-J}, \(SA\) is not KK-equivalent to any nuclear C*-algebra. Put \(B=(SA)^{\sim}\), the minimal unitization of \(SA\).

The next step is to pass from this quotient-level obstruction to an RFD algebra. Choose u.c.p. maps \(\phi_n:B\to M_{k(n)}\) witnessing quasidiagonality and define
\[
 \Phi:B\to \prod_{n=1}^{\infty}M_{k(n)},
 \qquad
 \Phi(b)=(\phi_n(b))_n.
\]
Let \(C\) be the C*-algebra generated by \(\Phi(B)\) and \(\bigoplus_n M_{k(n)}\). Then there is a semisplit short exact sequence
\[
 0\to\bigoplus_{n=1}^{\infty}M_{k(n)}\to C\to B\to0.
\]
Thus \(C\) is exact and RFD. The naturality diagram for maximal and minimal tensor products, together with the semisplit extension above, carries the rational Skandalis obstruction to \(C\). In particular, there is a class
\[
 0\ne \xi_C\in K_0(J_C)\otimes_{\mathbb Z}\Qrat.
\]
We choose such a class once and for all.

It remains to feed \(C\) into Dadarlat's construction \cite{DadarlatAF2000}, in the form used by Niu--Wang \cite{NiuWang2021}, and to verify that the rational obstruction is not lost in the inductive limit. The following lemma isolates the persistence point. The connecting maps contain a finite-dimensional summand, but that summand factors through the minimal tensor product and hence vanishes on the Skandalis ideal. What remains is the corner embedding of the original obstruction.

\begin{lemma}\label{lem:persistence}
Let \(D\) be an exact C*-algebra and let
\[
 \alpha_m:M_{s(m)}(D)\to M_{s(m+1)}(D)
\]
be a Dadarlat-type connecting map
\[
 \alpha_m(a)=\diag(a,\pi_m(a)),
\]
where \(\pi_m\) is finite-dimensional. Put \(D_m=M_{s(m)}(D)\). Then the induced map
\[
 (\alpha_m\otmax\alpha_m)_*:K_0(J_{D_m})\otimes\Qrat
 \to K_0(J_{D_{m+1}})\otimes\Qrat
\]
is injective. More precisely, under the stability identifications
\[
 J_{D_m}\cong M_{s(m)^2}(J_D),
\]
the map induced on \(J_{D_m}\) is the standard corner embedding.
\end{lemma}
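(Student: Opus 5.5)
The plan is to compute \((\alpha_m\otmax\alpha_m)|_{J_{D_m}}\) directly. I will split \(\alpha_m\) into its two orthogonal summands. Then I will show that every tensor term involving the finite-dimensional summand \(\pi_m\) kills the kernel of \(\max\to\min\). Only the corner term survives, and a corner embedding is injective on \(K_0\).

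\emph{Step 1: stability identification.} Since \(M_{s}\) is nuclear and finite-dimensional,
\[
M_s(D)\otmax M_s(D)\cong M_{s^2}(D\otmax D),\qquad
M_s(D)\otmin M_s(D)\cong M_{s^2}(D\otmin D).
\]
Under these isomorphisms the canonical map \(\iota_{D_m,D_m}\) becomes \(\id_{M_{s^2}}\otimes\iota_{D,D}\). Taking kernels, and using that \(M_{s^2}\otimes-\) is exact, gives
\[
J_{D_m}\cong M_{s(m)^2}(J_D).
\]

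\emph{Step 2: orthogonal decomposition.} Write \(\alpha_m=\iota+\pi_m\). Here \(\iota:D_m\to D_{m+1}\) is the upper-left corner embedding, and \(\pi_m\), with values in \(M_{k}(\C)\subset M_k(D)\) (scalars times a unit, or a unitization if \(D\) is nonunital), sits in the complementary corner. The two maps are \(*\)-homomorphisms with orthogonal ranges. Hence \(\alpha_m\otmax\alpha_m\) is the orthogonal sum of the four \(*\)-homomorphisms
\[
\iota\otmax\iota,\quad \iota\otmax\pi_m,\quad \pi_m\otmax\iota,\quad \pi_m\otmax\pi_m,
\]
which land in mutually orthogonal corners of \(D_{m+1}\otmax D_{m+1}\). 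On \(K_0\) of the restriction to \(J_{D_m}\), the induced map is therefore the sum of the four induced maps, and each of them preserves the relevant ideals.

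\emph{Step 3: the mixed terms vanish on \(J\).} Consider \(\iota\otmax\pi_m\). Its image lies in \(D_{m+1}\otmax F\) with \(F=\pi_m(D_m)\) finite-dimensional, hence nuclear, so
\[
D_{m+1}\otmax F= D_{m+1}\otmin F .
\]
Thus \(\iota\otmax\pi_m\) equals the composite
\[
D_m\otmax D_m\to D_m\otmin D_m\xrightarrow{\iota\otmin\pi_m} D_{m+1}\otmin F.
\]
This composite annihilates \(J_{D_m}\). The terms \(\pi_m\otmax\iota\) and \(\pi_m\otmax\pi_m\) are handled the same way. Consequently
\[
(\alpha_m\otmax\alpha_m)|_{J_{D_m}}=(\iota\otmax\iota)|_{J_{D_m}}.
\]
Under Step 1 this is the corner embedding \(M_{s(m)^2}(J_D)\to M_{s(m+1)^2}(J_D)\).

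\emph{Step 4: conclusion.} A corner embedding \(M_k(J_D)\hookrightarrow M_l(J_D)\) induces an isomorphism on \(K_0\) by matrix stability. In particular it is injective after tensoring with \(\Qrat\), which is flat.

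The main obstacle is Step 3. One must check carefully that \(\iota\otmax\pi_m\) really factors through the minimal tensor product, i.e. that its target may be taken to be \(D_{m+1}\otmax F\) as a subalgebra of \(D_{m+1}\otmax D_{m+1}\). This is not automatic, because \(\otmax\) is not injective in general. I would handle it by noting that \(F\) sits inside the corner \(M_k(\C)\cdot 1\), which is nuclear. For a nuclear subalgebra \(F\subset D_{m+1}\), the inclusion \(D_{m+1}\otmax F\to D_{m+1}\otmax D_{m+1}\) is isometric because \(F\) is the range of a conditional expectation (the corner compression composed with the scalar part). Alternatively, one can first map into \(D_{m+1}\otmax M_k\), where max and min coincide, and then into the corner of \(D_{m+1}\otmax D_{m+1}\) by a \(*\)-homomorphism.
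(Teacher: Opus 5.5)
Your proposal is correct and follows the same route as the paper. You split \(\alpha_m\) into the corner embedding and the finite-dimensional summand, kill every tensor term involving \(\pi_m\) on \(J_{D_m}\) by factoring it through a minimal tensor product, and identify the surviving term \(\iota\otmax\iota\) with a corner embedding \(M_{s(m)^2}(J_D)\to M_{s(m+1)^2}(J_D)\), which is an isomorphism on \(K_0\). Your Step 1 and your second fix in Step 3 (vanishing on \(J_{D_m}\) needs only the \(*\)-homomorphism \(D_{m+1}\otmax M_k\to D_{m+1}\otmax D_{m+1}\), not injectivity of \(\otmax\)) fill in details the paper leaves implicit.
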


\begin{proof}
Write \(\alpha_m=\iota_m\oplus\pi_m\), where \(\iota_m\) is the first diagonal corner. Then
\[
 \alpha_m\otmax\alpha_m
 =
 (\iota_m\otmax\iota_m)
 \oplus(\iota_m\otmax\pi_m)
 \oplus(\pi_m\otmax\iota_m)
 \oplus(\pi_m\otmax\pi_m).
\]
Every term involving \(\pi_m\) factors through a minimal tensor product because \(\pi_m\) is finite-dimensional. Hence those terms vanish on \(J_{D_m}\). On \(J_{D_m}\), therefore,
\[
 (\alpha_m\otmax\alpha_m)(x)=(\iota_m\otmax\iota_m)(x).
\]
Under the identifications \(J_{D_m}\cong M_{s(m)^2}(J_D)\), this is precisely the usual matrix-corner embedding into \(M_{s(m+1)^2}(J_D)\). It induces an isomorphism on K-theory after stability and is injective after tensoring with \(\Qrat\).
\end{proof}

\begin{theorem}\label{thm:example-D-inductive-limit}
For a suitable choice of integers \(s(m)\), with \(V_m=M_{s(m)}(C)\) and Dadarlat connecting maps \(\alpha_m:V_m\to V_{m+1}\), the inductive limit
\[
 V=\varinjlim(V_m,\alpha_m)
\]
is exact, separable, unital, tracially AF, not KK-equivalent to any nuclear C*-algebra, and not \(\Z\)-stable.
\end{theorem}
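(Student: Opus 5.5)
The plan is to verify the listed properties of \(V\) one at a time, using the Niu--Wang form of Dadarlat's construction for the structural properties and Lemma \ref{lem:persistence} for the obstruction.

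\emph{Structural properties.} Separability and unitality are immediate, since each \(V_m=M_{s(m)}(C)\) is separable and unital and the connecting maps \(\alpha_m=\iota_m\oplus\pi_m\) are unital. Exactness passes to matrix algebras over the exact algebra \(C\) and to inductive limits, so \(V\) is exact.

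\emph{Tracial AF property.} For this I would invoke the Niu--Wang version of Dadarlat's theorem directly. \(C\) is RFD, so there are enough finite-dimensional representations \(\pi_m\). One chooses them to contain a separating family, each with multiplicity growing so that
\[
 \frac{s(m)}{s(m+1)}\to 0 .
\]
With these choices the limit is simple. Moreover, the "large" finite-dimensional summand \(\pi_m\) provides, in each \(V_m\), a projection \(1-e\) commuting approximately with a finite set and carrying a finite-dimensional subalgebra. Its complement \(e\) is the image of the small corner \(\iota_m\), which is tracially small and Cuntz-subequivalent to arbitrarily small pieces of any nonzero positive element. This is exactly Lin's TAF definition. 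I would cite \cite{DadarlatAF2000,NiuWang2021} for this step rather than reprove it.

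\emph{Failure of \(\Z\)-stability.} This should follow because \(V\) is tracially AF but not nuclear. A simple separable unital exact TAF algebra that were \(\Z\)-stable would need to be compared with the nuclear classification framework. The more robust route is the one taken by Niu--Wang. I expect the intended argument is that the construction can be arranged so that the limit has non-nuclear-type behavior incompatible with \(\Z\)-stability. I would look for a specific invariant, such as failure of strict comparison or of an appropriate regularity property, that Niu--Wang establish for their limits and that obstructs \(\Z\)-absorption. Pinning this down is the step I am least sure of.

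\emph{KK-obstruction.} The key point is that \(J\) commutes with inductive limits. Since \(\otmax\) and \(\otmin\) both commute with inductive limits (the latter for injective connecting maps), we get
\[
 J_V=\varinjlim\bigl(J_{V_m},(\alpha_m\otmax\alpha_m)|\bigr).
\]
By continuity of \(K\)-theory and exactness of \(\otimes\Qrat\),
\[
 K_0(J_V)\otimes\Qrat=\varinjlim K_0(J_{V_m})\otimes\Qrat .
\]
By Lemma \ref{lem:persistence}, each connecting map is injective rationally, being the corner embedding. Hence the image of \(\xi_C\), placed in the corner of \(J_{V_1}=M_{s(1)^2}(J_C)\), survives to a nonzero class in \(K_0(J_V)\otimes\Qrat\). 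In particular \(K_0(J_V)\ne0\), and Lemma \ref{lem:KK-obstruction} shows that \(V\) is not KK-equivalent to any nuclear C*-algebra.

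The main obstacle is the persistence and limit step. It requires justifying that \(\otmin\) commutes with the limit and that the \(\pi_m\)-terms really vanish on \(J\), which is the content of Lemma \ref{lem:persistence}. The TAF verification is delegated to the cited construction.
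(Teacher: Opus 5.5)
Your treatment of separability, unitality, exactness and the tracially AF property (by citation to Dadarlat and Niu--Wang) matches the paper's proof. So does your handling of the $\KK$-obstruction: $J_V\cong\varinjlim J_{V_m}$ because $\otmax$, and $\otmin$ for the injective maps $\alpha_m$, commute with the limit; then continuity of $K_0$, exactness of $-\otimes\Qrat$, the corner-embedding description from Lemma~\ref{lem:persistence}, and finally Lemma~\ref{lem:KK-obstruction}.

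The genuine gap is the failure of $\Z$-stability. You give no argument, and both routes you suggest are dead ends.
\begin{enumerate}[label=\textup{(\roman*)}]
\item Non-nuclearity of a tracially AF algebra does not obstruct $\Z$-stability. For example, $V\otimes\cQ$ is simple, exact, non-nuclear, tracially AF and $\Z$-stable, since $\cQ\cong\cQ\otimes\Z$.
\item Failure of strict comparison is impossible. A simple unital tracially AF algebra has real rank zero, stable rank one and comparison of projections by traces, and this gives strict comparison of positive elements.
\end{enumerate}
The missing idea is tracial. The paper imports it from Niu--Wang, saying that non-$\Z$-stability is ``inherited from the non-inner-amenable quotient''. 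If $V\cong V\otimes\Z$, then for an extremal trace $\tau$ the factor $\pi_\tau(V)''\cong\pi_\tau(V)''\bar\otimes\mathcal R$ is McDuff. The Niu--Wang mechanism rules this out using a non-inner-amenable group von Neumann algebra that appears in the GNS closure of the limit trace on their building block.

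Be aware that this citation does not transfer to the present $C$ without further work, and the paper does not supply that work.
\begin{enumerate}[label=\textup{(\arabic*)}]
\item Because $s(1)/s(m)\to0$, the limit trace restricted to $V_1=M_{s(1)}(C)$ is a limit of normalized traces of finite-dimensional representations of $C$.
\item The only finite-dimensional irreducible representations of $C$ are the coordinate maps onto $M_{k(n)}$ and the character at infinity of $B=(SA)^{\sim}$. This is because $C_r^*(\Gamma)$ has no finite-dimensional representations when $\Gamma$ is non-amenable.
\item Limit points of the coordinate traces are quasidiagonal traces on $B$, hence amenable traces. A trace on $B$ with a nonzero component on $SA$ would produce an amenable trace on $C_r^*(\Gamma)$, which forces $\Gamma$ to be amenable. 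So the only such limit point is the character at infinity.
\item Hence $\tau|_C$ is a countable convex combination of finite-dimensional traces. The same holds at every stage, so $\pi_\tau(V)''$ is hyperfinite.
\end{enumerate}
In other words, the quotient $C\to C_r^*(\Gamma)$ is invisible to the trace, and there is no McDuff obstruction to exploit. To close this step you would need either a building block whose finite-dimensional traces converge to a trace with non-inner-amenable GNS closure, as in Niu--Wang, or a genuinely different argument.
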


\begin{proof}
The exactness, simplicity, unitality, and tracially AF property follow from the Dadarlat construction in the form used by Niu--Wang \cite[Theorem 2.2]{NiuWang2021}. The failure of \(\Z\)-stability is inherited from the non-inner-amenable quotient used in that construction \cite{NiuWang2021}.

It remains to prove that the Skandalis obstruction survives the inductive limit. For each \(m\),
\[
 J_{V_m}=J_{M_{s(m)}(C)}\cong M_{s(m)^2}(J_C),
\]
so
\[
 K_0(J_{V_m})\otimes\Qrat
 \cong K_0(J_C)\otimes\Qrat.
\]
Let \(\xi_m\) be the class corresponding to \(\xi_C\). Lemma \ref{lem:persistence} gives
\[
 (\alpha_m\otmax\alpha_m)_*(\xi_m)=\xi_{m+1},
\]
and the map is injective on the rational obstruction group. Therefore no image of \(\xi_1\) is killed at a later stage.

Because maximal and minimal tensor products commute with these injective inductive limits, the canonical ideals satisfy
\[
 J_V\cong \varinjlim_m J_{V_m}.
\]
Consequently,
\[
 K_0(J_V)\otimes\Qrat
 \cong \varinjlim_m(K_0(J_{V_m})\otimes\Qrat)
\]
contains the nonzero class represented by the compatible system \((\xi_m)_m\). Thus \(K_0(J_V)\otimes\Qrat\ne0\). Lemma \ref{lem:KK-obstruction} now implies that \(V\) is not KK-equivalent to any nuclear C*-algebra.
\end{proof}

The same construction, applied to the cone rather than the suspension, produces an AF-embeddable but KK-contractible example containing \(V\), in the spirit of Spielberg's AF-embedding results and the comparison with Schafhauser's work \cite{Schafhauser2020}.

\section*{Acknowledgements}
The first author gratefully acknowledges support from the Fields Institute and the University of Ottawa during his postdoctoral fellowship. The second author was partially supported by Iran National Science Foundation (INSF Grant No. 4029595). 

We used GPT-5.5 pro to assist in drafting parts of this manuscript. All mathematical content was reviewed and substantially revised by the authors, who are responsible for the final text.

\end{document}